\tikzset{->-/.style={decoration={
  markings,
  mark=at position .5 with {\arrow{>}}},postaction={decorate}}}
\DeclarePairedDelimiterX{\norm}[1]{\lVert}{\rVert}{#1}
\theoremstyle{plain}
\newtheorem{theorem}{Theorem}[section]
\newtheorem{lemma}[theorem]{Lemma}
\newtheorem{cor}[theorem]{Corollary}
\newtheorem{prop}[theorem]{Proposition}
\theoremstyle{definition}
\begin{document}
\title{A conjecture on the lengths of filling pairs}

\author{Bidyut Sanki}
\address{Department of Mathematics and Statistics, Indian Institute of Technology\\ Kanpur\\ Uttar Pradesh - 208016\\ India}
\email{bidyut@iitk.ac.in}
\author{Arya Vadnere}
\address{Chennai Mathematical Institute\\
Siruseri, Tamil Nadu - 603103\\
India}
\email{aryav@cmi.ac.in}

\maketitle

\begin{abstract}
A pair $(\alpha, \beta)$ of simple closed geodesics on a closed and oriented hyperbolic surface $M_g$ of genus $g$ is called a filling pair if the complementary components of $\alpha\cup\beta$ on $M_g$ are simply connected. The length of a filling pair is defined to be the sum of their individual lengths. In~\cite{Aou}, Aougab-Huang conjectured that the length of any filling pair on $M_{g}$ is at least $\frac{m_{g}}{2}$, where $m_{g}$ is the perimeter of the regular right-angled hyperbolic $\left(8g-4\right)$-gon.

In this paper, we prove a generalized isoperimetric inequality for disconnected regions and we prove the Aougab-Huang conjecture as a corollary.
\end{abstract}

\section{Introduction}
Let $M_g$ be a closed and oriented hyperbolic surface of genus $g$. A pair $(\alpha, \beta)$, of simple closed curves on $M_g$ is called a filling pair if the complement of their union $\alpha\cup \beta$ in $M_g$ is a disjoint union of topological discs. It is assumed that the curves $\alpha$ and $\beta$ are in minimal position, i.e., the geometric intersection number $i(\alpha, \beta)$ is equal to $|\alpha\cap\beta|$ (see Section 1.2.3 in~\cite{Far}).

To a filling pair one can associate a natural number $k$, the number of topological discs in the complement $M_g\setminus (\alpha\cup\beta).$ A filling pair $(\alpha, \beta)$ is minimal when $k=1$. For a minimal filling pair $(\alpha, \beta)$ of $M_g$, the geometric intersection number is given by $i(\alpha, \beta)=2g-1$ (see Lemma 2.1 in~\cite{Aou}).

The set of all closed and oriented hyperbolic surfaces of genus $g\geq 2$, up to isometry, is called the moduli space of genus $g$ and is denoted by $\mathcal{M}_g.$ The length of a filling pair $(\alpha, \beta)$ on a hyperbolic surface $M_g\in \mathcal{M}_g$ is defined by the sum of their individual lengths: $$L_{M_g}(\alpha, \beta) = l_{M_g}(\alpha)+ l_{M_g}(\beta),$$ where $l_{M_g}(\alpha)$ denotes the length of the geodesic representative in the free homotopy class $[\alpha]$ of $\alpha$ on $M_g$.

If $(\alpha, \beta)$ is a filling pair of a hyperbolic surface $M_g\in \mathcal{M}_g$, then we assume that $\alpha$ and $\beta$ are simple closed geodesics. When we cut $M_g$ open along a minimal filling pair, we obtain a hyperbolic $(8g-4)$-gon with area $4\pi(g-1)$ which is equal to the area of the surface $M_g$. The length of the filling pair is equal to half of the perimeter of this $(8g-4)$-gon.

It is known that among all hyperbolic $n$-gons with a fixed area, the regular $n$-gon has the least perimeter (see Bezdek~\cite{Bez}). In particular, we see that a regular right-angled $(8g-4)$-gon, denoted by  $\mathcal{P}_g$, has the least perimeter among all $(8g-4)$-gons with fixed area $4\pi(g-1)$. Thus, if $m_g$ is the perimeter of a hyperbolic regular right-angled $(8g-4)$-gon and  $$\mathcal{F}_g(M_g) = \min\{L_{M_g}(\alpha, \beta)\mid (\alpha, \beta) \text{ is a minimal filling of } M_g\},$$ then $$\mathcal{F}_g(M_g)\geq \frac{m_g}{2}, \text{ for all } M_g\in \mathcal{M}_g.$$ This fact is observed in~\cite{Aou} (see Theorem 1.3~\cite{Aou}). It is also shown in~\cite{Aou} that there are finitely many surfaces where the equality holds. Furthermore, Aougab and Huang have defined the filling pair systole function $\mathcal{Y}_g: \mathcal{M}_g\to \mathbb{R}$, by
\begin{equation}\label{systole}
\mathcal{Y}_g(M_g)=\min\{L_{M_g}(\alpha, \beta)\mid (\alpha, \beta) \text{ is a filling pair of } M_g\},
\end{equation}
and conjectured that (see Conjecture 4.6~\cite{Aou}): $$\mathcal{Y}_g(M_g)\geq \frac{m_g}{2}, \text{ for all } M_g\in \mathcal{M}_g.$$ Aougab and Huang have proved their conjecture when $M_g\setminus (\alpha\cup\beta)$ has two components (see Corollary 4.5 in \cite{Aou}).

If $(\alpha, \beta)$ is a filling pair of $M_g\in \mathcal{M}_g$, then the complement $M_g\setminus (\alpha\cup\beta)$ is a collection of even sided polygons with areas that sum to the area of $M_g$ which is equal to $4\pi(g-1)$. The number of sides in each complementary polygon is at least four, which follows from the fact that $\alpha$ and $\beta$ are a pair of simple curves in minimal position. If there are $k$ polygons in $M_g\setminus (\alpha\cup\beta)$, then a calculation with Euler characteristic implies that $$8g-4 = 4(1-k)+2 \sum\limits_{i=1}^k m_i,$$ where $2m_1, \dots, 2m_k,$ are the number of sides of the polygons.

For a polygon $P$, the area and the perimeter of $P$ are denoted by $\mathrm{area}(P)$ and $\mathrm{Perim}(P)$ respectively. 

In this article, we prove the theorem below:
\begin{theorem}[Main Theorem]\label{thm:main}
Suppose $P_i$'s are hyperbolic $2m_i$-gons, with $m_i\geq 2$, for $ i=1, \dots, k$. Let $\mathcal{R}$ be a regular $N$-gon, such that
\begin{enumerate}
\item  $N=4(1-k)+2\sum\limits_{i=1}^km_i$ and 
\item $\mathrm{area}(\mathcal{R}) = \sum\limits_{i=1}^k \mathrm{area}(P_i)$.
\end{enumerate}
If $\mathcal{R}$ is not acute, then $\sum\limits_{i=1}^k \mathrm{Perim}(P_i)\geq \mathrm{Perim}(\mathcal{R}).$
\end{theorem}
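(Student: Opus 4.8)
\emph{Strategy.} The plan is to strip the statement down to two one--variable inequalities about the perimeter of a \emph{regular} hyperbolic polygon regarded as a function of its area. By Bezdek's theorem quoted above, the regular hyperbolic $2m_i$--gon $Q_i$ of area $a_i:=\mathrm{area}(P_i)$ satisfies $\mathrm{Perim}(P_i)\ge\mathrm{Perim}(Q_i)$, so it suffices to prove $\sum_{i=1}^k\mathrm{Perim}(Q_i)\ge\mathrm{Perim}(\mathcal R)$; hence from now on every polygon is regular and I write $f(n,A)$ for the perimeter of the regular hyperbolic $n$--gon of area $A$ (for $n\ge 4$, $0\le A<(n-2)\pi$). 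Splitting such a polygon into $2n$ congruent right triangles, by the segments from its centre to the vertices and to the edge midpoints, gives the closed form
\[
f(n,A)=2n\,\operatorname{arccosh}\!\left(\frac{\cos(\pi/n)}{\sin\!\big(\tfrac{(n-2)\pi-A}{2n}\big)}\right),\qquad f(n,0)=0,
\]
and the regular $n$--gon is \emph{not acute} precisely when $A\le\tfrac{\pi}{2}(n-4)$. (In the surface application $N=8g-4$ and $\mathrm{area}(\mathcal R)=4\pi(g-1)=\tfrac{\pi}{2}(N-4)$, so $\mathcal R$ is right--angled.)

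\emph{A combinatorial induction on $k$.} Put $n_i=2m_i$ and $\delta_i:=\tfrac{\pi}{2}(n_i-4)-a_i$; then $\sum_{i=1}^k\delta_i=\tfrac{\pi}{2}(N-4)-\mathrm{area}(\mathcal R)\ge 0$, precisely because $\mathcal R$ is not acute. The case $k=1$ is the identity $f(n_1,a_1)=\mathrm{Perim}(\mathcal R)$. For $k\ge 2$, relabel so that $\delta_k=\min_i\delta_i$; since then $\delta_k\le\tfrac1k\sum_i\delta_i$, we get $\sum_{i<k}\delta_i=\sum_i\delta_i-\delta_k\ge\tfrac{k-1}{k}\sum_i\delta_i\ge 0$. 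With $N'':=\sum_{i<k}n_i-4(k-2)$ and $A'':=\sum_{i<k}a_i$ this says $\tfrac{\pi}{2}(N''-4)-A''=\sum_{i<k}\delta_i\ge 0$, so the regular $N''$--gon of area $A''$ is itself not acute, and the inductive hypothesis gives $\sum_{i<k}f(n_i,a_i)\ge f(N'',A'')$. Since $N=N''+n_k-4$, $N''\le N$, $n_k\le N$, and $\mathrm{area}(\mathcal R)=A''+a_k$, the theorem reduces to the \emph{binary merge}: $f(N'',A'')+f(n_k,a_k)\ge f\big(N,\,A''+a_k\big)$ whenever the regular $N$--gon of area $A''+a_k$ is not acute.

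\emph{Proving the binary merge.} I would deduce it from two properties of $f$: \textbf{(A)} for fixed $A$, $f(n,A)$ is non--increasing in $n$; and \textbf{(B)} for fixed $n$, $f(n,a)+f(n,b)\ge f(n,a+b)$ whenever $a,b\ge 0$ and $a+b\le\tfrac{\pi}{2}(n-4)$. Given these, (A) yields $f(N'',A'')\ge f(N,A'')$ and $f(n_k,a_k)\ge f(N,a_k)$ (because $N'',n_k\le N$), while (B) for the $N$--gon gives $f(N,A'')+f(N,a_k)\ge f(N,A''+a_k)$, since $A''$, $a_k$ and $A''+a_k$ all lie in $[0,\tfrac{\pi}{2}(N-4)]$; adding these proves the binary merge.

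\emph{The two properties, and the main obstacle.} Property (A) is a calculus estimate: writing $u=\pi/n$ and $\kappa=1+A/2\pi$ one has $f=\tfrac{2\pi}{u}\operatorname{arccosh}\!\big(\cos u/\cos(\kappa u)\big)$, and (A) amounts to this being non--decreasing in $u$ for fixed $\kappa\ge 1$; as $u\to 0$ it degenerates to the hyperbolic isoperimetric value $\sqrt{A^2+4\pi A}$. Property (B) is the crux. With the vertex half--angle $\phi=\tfrac{(n-2)\pi-A}{2n}\in(0,\tfrac{\pi}{2}-\tfrac{\pi}{n})$ one computes
\[
\frac{\partial^2 f}{\partial A^2}=\frac{\cos(\pi/n)}{2n}\cdot\frac{\cos^2(\pi/n)-\sin^2\!\phi\,(1+\cos^2\!\phi)}{\sin^2\!\phi\,\big(\cos^2(\pi/n)-\sin^2\!\phi\big)^{3/2}},
\]
so $f(n,\cdot)$ is concave on the whole non--acute range for $4\le n\le 6$, and then (B) follows at once from $f(n,0)=0$. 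For $n\ge 7$ the sign above changes exactly once on $(0,\tfrac{\pi}{2}(n-4))$: $f(n,\cdot)$ is concave on an initial segment $[0,A^{\ast}]$ but convex on $[A^{\ast},\tfrac{\pi}{2}(n-4)]$, so (B) cannot be read off from concavity and must be argued directly. I expect to do this by fixing $S:=a+b\le\tfrac{\pi}{2}(n-4)$ and showing that $g(a):=f(n,a)+f(n,S-a)-f(n,S)$, which is symmetric about $a=S/2$ and vanishes at $a\in\{0,S\}$, has no negative interior minimum --- using that $\partial f/\partial A\to+\infty$ as $A\to 0$ while $\partial^2 f/\partial A^2$ changes sign at most once on $(0,S)$ --- possibly after first reducing to the extremal case $S=\tfrac{\pi}{2}(n-4)$, which is the only one the surface corollary needs. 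Settling (B) for $n\ge 7$ is the step I expect to be hardest, and the hypothesis that $\mathcal R$ is not acute is exactly what confines every area occurring in the argument to the range $[0,\tfrac{\pi}{2}(n-4)]$ on which (B) holds.
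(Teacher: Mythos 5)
Your combinatorial reduction is correct, and it is actually slicker than the paper's: removing the piece of minimal deficiency $\delta_i=\tfrac{\pi}{2}(n_i-4)-a_i$ guarantees that the intermediate regular polygon is again non-acute, which the paper instead arranges by ordering the pieces by interior angle and proving two angle lemmas (Lemmas~\ref{lem:1}, \ref{lem:2}) plus an induction (Lemma~\ref{lem3.2}). The binary merge you arrive at is exactly Proposition~\ref{Thm1}, and that is where all the real content of the paper lies.

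The proposed proof of the binary merge, however, has a fatal gap: property \textbf{(B)} is false on the non-acute range once $n$ is large, so the strategy cannot be completed. Concretely, take $n=60$, $a=b=14\pi$, so $a+b=28\pi=\tfrac{\pi}{2}(n-4)$. Then $f(60,14\pi)=120\cosh^{-1}\!\left(\cos(\pi/60)/\sin(11\pi/30)\right)\approx 51.4$, while $f(60,28\pi)=120\cosh^{-1}\!\left(\sqrt{2}\cos(\pi/60)\right)\approx 105.5$, so $f(n,a)+f(n,b)\approx 102.8<f(n,a+b)$. Asymptotically the failure is structural: per side, the right-angled value tends to $2\cosh^{-1}(\sqrt{2})\approx 1.763$, whereas twice the half-area value tends to $4\cosh^{-1}\!\left(\sec(\pi/8)\right)\approx 1.613$. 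This is consistent with Theorem~\ref{thm:iso_ineq_discon} (from \cite{San}): fixed-$n$ subadditivity needs the angle threshold $\cos^{-1}\!\left(-1+2\sin(\pi/n)\right)$, which exceeds $\pi/2$ for all $n>6$ and tends to $\pi$; the paper invokes it only for the hexagon, where the threshold is exactly $\pi/2$. The root of the problem is your step \textbf{(A)}: replacing the $N''$-gon and the $n_k$-gon by $N$-gons of the same area discards the fact that the pieces carry $4(k-1)$ more sides in total than $\mathcal{R}$, and that surplus is precisely what makes the true inequality hold --- in the example above, the actual pieces are two right-angled $32$-gons with total perimeter $\approx 112>105.5$. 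The paper's proof of Proposition~\ref{Thm1} keeps the side numbers in play: it fixes the two interior angles, uses concavity of $\phi_c(x)=2x\cosh^{-1}\!\left(c\cos\left(\frac{\pi}{2x}\right)\right)$ in the side-number variable (Lemma~\ref{lem:concave}) to push to endpoint configurations, and then handles those via the monotonicity-in-$n$ lemma, Proposition~\ref{prop:gen_ineq}, and the quadrilateral/octagon analysis of \cref{sec:6}. Some substitute for that analysis is unavoidable; your planned attack on \textbf{(B)} (no negative interior minimum, single inflection) cannot be repaired, because the inequality it targets is simply not true in the range you need.
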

As a consequence of Theorem~\ref{thm:main}, we have the corollary below that proves the conjecture of Aougab and Huang.
\begin{cor}\label{cor:conj}
Let $\mathcal{Y}_g$ be the filling pair systole function on $\mathcal{M}_g$, defined in equation~\eqref{systole}. Then $$\mathcal{Y}_g(M_g)\geq \frac{m_g}{2}, \text{ for all } M_g\in \mathcal{M}_g,$$ where $$m_g=(8g-4) \cosh^{-1} \left(2\cos\left( \frac{2\pi}{8g-4}\right) + 1\right)$$ is the perimeter of a regular right-angled hyperbolic $(8g-4)$-gon.
\end{cor}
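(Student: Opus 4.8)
The plan is to deduce Corollary~\ref{cor:conj} from Theorem~\ref{thm:main} essentially by bookkeeping: the entire analytic difficulty has been packed into the generalized isoperimetric inequality, and what remains is to check that cutting a hyperbolic surface along a filling pair produces exactly the kind of polygonal data to which Theorem~\ref{thm:main} applies, with the regular right-angled $(8g-4)$-gon as the comparison polygon. Fix $M_g\in\mathcal{M}_g$ and let $(\alpha,\beta)$ be an arbitrary filling pair of $M_g$, realized by the simple closed geodesics in the two homotopy classes, in minimal position. Cutting $M_g$ open along $\alpha\cup\beta$ gives hyperbolic polygons $P_1,\dots,P_k$ with geodesic sides; as recalled in the introduction, each $P_i$ is a $2m_i$-gon with $m_i\geq 2$, one has $8g-4=4(1-k)+2\sum_{i=1}^k m_i$, and $\sum_{i=1}^k\mathrm{area}(P_i)=\mathrm{area}(M_g)=4\pi(g-1)$. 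Since $\alpha\cup\beta$ is a $4$-valent graph on $M_g$ whose edges are exactly the sides of the $P_i$, and each edge is traversed twice in the union of the boundary cycles of the $P_i$, we also have $\sum_{i=1}^k\mathrm{Perim}(P_i)=2\bigl(l_{M_g}(\alpha)+l_{M_g}(\beta)\bigr)=2L_{M_g}(\alpha,\beta)$.

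Next I would pin down the comparison polygon. Let $\mathcal{R}$ be the regular hyperbolic $(8g-4)$-gon of area $4\pi(g-1)$; it exists and is unique because, for fixed $n$, the area $(n-2)\pi-n\theta$ of a regular $n$-gon with vertex angle $\theta$ is a decreasing bijection from $\bigl(0,\tfrac{(n-2)\pi}{n}\bigr)$ onto $\bigl(0,(n-2)\pi\bigr)$, and $4\pi(g-1)$ lies in that range for $n=8g-4$ and $g\geq 2$. Solving $(8g-6)\pi-(8g-4)\theta=4\pi(g-1)$ gives $\theta=\tfrac{\pi}{2}$, so $\mathcal{R}$ is precisely the regular right-angled $(8g-4)$-gon $\mathcal{P}_g$; in particular $\mathcal{R}$ is not acute. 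Splitting $\mathcal{R}$ into $2(8g-4)$ congruent right triangles, each with angles $\tfrac{\pi}{8g-4}$ at the center, $\tfrac{\pi}{4}$ at a vertex and $\tfrac{\pi}{2}$ at the midpoint of a side, the hyperbolic right-triangle relation $\cos\alpha=\cosh a\,\sin\beta$, applied with $\alpha=\tfrac{\pi}{8g-4}$ and $a=\tfrac{s}{2}$ the half-side opposite it, yields $\cosh\tfrac{s}{2}=\sqrt{2}\cos\tfrac{2\pi}{8g-4}$, hence $\cosh s=2\cos\tfrac{2\pi}{8g-4}+1$ and $\mathrm{Perim}(\mathcal{R})=(8g-4)s=m_g$ with the stated formula.

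Now the data $P_1,\dots,P_k$ and $\mathcal{R}$ satisfy hypotheses (1) and (2) of Theorem~\ref{thm:main} with $N=8g-4$, and $\mathcal{R}$ is not acute, so Theorem~\ref{thm:main} gives $\sum_{i=1}^k\mathrm{Perim}(P_i)\geq\mathrm{Perim}(\mathcal{R})=m_g$. Combined with $\sum_{i=1}^k\mathrm{Perim}(P_i)=2L_{M_g}(\alpha,\beta)$ this gives $L_{M_g}(\alpha,\beta)\geq\tfrac{m_g}{2}$. As $(\alpha,\beta)$ was an arbitrary filling pair of $M_g$, taking the minimum over all filling pairs of $M_g$ yields $\mathcal{Y}_g(M_g)\geq\tfrac{m_g}{2}$, which is the assertion of Corollary~\ref{cor:conj}.

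I do not expect a genuine obstacle here once Theorem~\ref{thm:main} is available. The one point deserving care is the numerical coincidence, special to the exponent $8g-4$, that the regular right-angled $(8g-4)$-gon has area exactly $4\pi(g-1)=\mathrm{area}(M_g)$; this is precisely what makes the right-angled polygon the correct object to feed into the ``not acute'' hypothesis of the Main Theorem. A secondary, though routine, point is the factor of $2$ relating $\sum_i\mathrm{Perim}(P_i)$ to the length $L_{M_g}(\alpha,\beta)$ of the filling pair, which comes from each arc of $\alpha\cup\beta$ lying on the boundary of the polygonal pieces with total multiplicity two.
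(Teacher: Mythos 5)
Your proposal is correct and takes essentially the same approach as the paper: cut along the geodesic filling pair, observe that the complementary polygons have total perimeter $2L_{M_g}(\alpha,\beta)$ and total area $4\pi(g-1)$, and apply the generalized isoperimetric inequality with the right-angled regular $(8g-4)$-gon (which is exactly the regular $(8g-4)$-gon of that area, hence not acute) as the comparison polygon. The only cosmetic difference is that you invoke Theorem~\ref{thm:main} directly on the possibly irregular pieces, whereas the paper's written proof first replaces each piece by the regular polygon of equal area via Bezdek's inequality and then cites Theorem~\ref{prop:3.1} --- which is precisely the reduction the paper itself uses to pass between those two statements.
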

While it is not hard to see that $\mathcal{Y}_g$ has a global minimum over $\mathcal{M}_g$, this corollary provides an explicit lower bound.

In~\cite{Aou}, Aougab and Huang have proved that $\mathcal{F}_g$ is a topological Morse function. Furthermore, there are finitely many surfaces $M_g$ such that  $\mathcal{F}_g(M_g) = \frac{m_g}{2}$  (for details of the proof, we refer to Theorem 1.3, Section 4~\cite{Aou}). A similar argument proves that $\mathcal{Y}_g$ is a \emph{generalized systole function} (see~\cite{Akr}) and hence a topological Morse function. Furthermore, it follows that there are at most finitely many $M_g\in \mathcal{M}_g$ such that $\mathcal{Y}_g(M_g) = \frac{m_g}{2}$. 

\vspace{0.5 cm}

\noindent \textbf{Acknowledgements:}
The first author would like to thank Siddhartha Gadgil, Mahan Mj and Divakaran D. for all the discussions. The second author would like to thank Satyajit Guin for hosting him at IIT Kanpur, making this work possible. The authors also thank the referee for several helpful comments and suggestions.


\section{Partitions of polygons}
In this section, we develop two lemmas, involving hyperbolic polygons and partitions of their areas, which are essential for the subsequent sections.

Let $\left(\alpha,\beta\right)$ be a filling pair of $M_{g}$. Then the complement of $\alpha\cup\beta$ in $M_{g}$ is a disjoint union of topological discs, and we write 
\begin{equation*}
M_{g}\setminus \left( \alpha \cup \beta \right) = \bigcup\limits_{i=1}^{k} P_{i},
\end{equation*}
where $k\in \mathbb{N}$ and $P_i$, for $i=1,\dots, k$, are topological discs. Note that when $M_{g}$ $\left(g\geq2\right)$ is a hyperbolic surface and $\left(\alpha,\beta\right)$ is a filling pair of geodesics then the polygons $P_{i}$ are hyperbolic polygons. 

 From another point of view, one can regard the union $\Gamma \left( \alpha,\beta\right)=\alpha\cup\beta$ as a decorated fat graph (also known as a ribbon graph) on $M_{g}$, where the intersection points of $\alpha$ and $\beta$ are the vertices, the sub-arcs of $\alpha$ and $\beta$ between the vertices are the edges, and the fat graph structure is determined by the orientation of the surface (we refer to Section 2 in~\cite{Bid} for notations).  Note that $\Gamma(\alpha, \beta)$ is a $4$-regular graph on $M_g$. If the number of vertices and edges in $ \Gamma \left( \alpha, \beta\right)$ are $v$ and $e$ respectively, then we have $e=2v$ and $v=i \left( \alpha, \beta \right)$, where $i \left( \alpha, \beta \right)$ is the geometric intersection number of $\alpha$ and $\beta$. Furthermore, the graph $\Gamma \left( \alpha, \beta \right)$ has $k$ boundary components (or equivalently faces) which is equal to the number of components in $M_{g}\setminus \left( \alpha\cup\beta\right)$. 

It is straightforward to see that $\Gamma \left( \alpha, \beta \right)$ is the $1$-skeleton of a cellular decomposition of $M_{g}$. Therefore by Euler's formula we have  $v-e+k = 2-2g$ which implies
\begin{align*} 
  v &= 2g+k-2 \text{ and}\\
  e &= 4g+2k-4.
\end{align*}
Observe that each edge in $\Gamma \left( \alpha, \beta \right)$ contributes two sides in the set of polygons $P_{i}$, for $i=1,\dots,k$. Among every two consecutive edges of $P_{i}$, one comes from $\alpha$ and the other from $\beta$. Furthermore, the curves $\alpha$ and $\beta$ are in minimal position, i.e., they do not form a bi-gon on $M_{g}$. Therefore, the number of sides of each $P_{i}$ is even and at least four. We assume that the number of sides of $P_{i}$ is $2m_{i}$, for some $m_{i}\geq2$, for $i=1,\dots,k$. Therefore by Euler's formula we have 
\begin{equation}\label{eq2.1}
\sum_{i=1}^{k}m_{i}=4g+2k-4.
\end{equation}
Suppose $\mathcal{P}_g$ is a right-angled regular hyperbolic $\left(8g-4\right)$-gon. Then by the Gauss-Bonnet formula (see Theorem 1.1.7 in~\cite{Bus}), we have $\mathrm{area}\left(\mathcal{P}_g\right)=\mathrm{area}\left(M_{g}\right)=4\pi\left(g-1\right)$.
Thus, 
\begin{equation}\label{eq2.2}
\sum_{i=1}^{k}\mathrm{area}\left(P_{i}\right)=\mathrm{area}\left(\mathcal{P}_g\right).
\end{equation}
In the following lemma we explore implications for the angles of $P_{i}$ in the simplest nontrivial case of equation (\ref{eq2.2}), namely $k=2$.
\begin{lemma}\label{lem:1}
Let $P, P_1, P_2$ be regular hyperbolic $2n$-, $2m_1$-, $2m_2$-gons with interior angles $\theta, \theta_1, \theta_2$ respectively, and suppose that 
\begin{enumerate}
\item $\theta\geq \frac{\pi}{2}$, and $m_1, m_2\geq 2$,
\item $\mathrm{area}(P)=\mathrm{area}(P_1)+\mathrm{area}(P_2)$ and
\item $2m_1+2m_2=2n+4.$
\end{enumerate}
Then we have:
\begin{enumerate}[label=(\alph*)]
\item If $\theta_1\leq \theta_2$, then $\theta_1\leq \theta$ and $\theta_2\geq \frac{\pi}{2}$.
\item If $\theta_1\leq \theta$, then $\theta_2\geq \frac{\pi}{2}$.
\end{enumerate}
\end{lemma}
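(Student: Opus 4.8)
The plan is to reduce everything to the behavior of a single real-valued function relating the area of a regular hyperbolic $2m$-gon to its interior angle. For a regular hyperbolic $2m$-gon with interior angle $\varphi$, the Gauss–Bonnet formula gives $\mathrm{area} = (2m-2)\pi - 2m\varphi$, so $\varphi \in (0, \tfrac{m-1}{m}\pi)$ and the area is a strictly decreasing function of $\varphi$. I would first record the combinatorial identity coming from hypothesis (3): since $2m_1 + 2m_2 = 2n+4$, we have $m_1 + m_2 = n + 2$. Combining this with the area identity in hypothesis (2) and Gauss–Bonnet applied to all three polygons, I get
\[
(2n-2)\pi - 2n\theta \;=\; (2m_1-2)\pi - 2m_1\theta_1 \;+\; (2m_2-2)\pi - 2m_2\theta_2,
\]
and after substituting $n = m_1+m_2-2$ and simplifying, this collapses to the clean relation
\[
n\theta \;=\; m_1\theta_1 + m_2\theta_2 \;-\; 2\pi \qquad\text{i.e.}\qquad (m_1+m_2-2)\,\theta \;=\; m_1\theta_1 + m_2\theta_2 - 2\pi.
\]
This single equation, together with $\theta \ge \pi/2$, is what I would work with throughout; the geometry has been fully squeezed out.

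For part (a), assume $\theta_1 \le \theta_2$. First I would show $\theta_1 \le \theta$: if instead $\theta_1 > \theta$, then also $\theta_2 \ge \theta_1 > \theta$, so $m_1\theta_1 + m_2\theta_2 > (m_1+m_2)\theta = (m_1+m_2-2)\theta + 2\theta \ge (m_1+m_2-2)\theta + \pi$, which would force the right-hand side of the displayed relation to exceed $(m_1+m_2-2)\theta + \pi - 2\pi = (m_1+m_2-2)\theta - \pi$ — I need to be careful with the inequality direction here, so let me instead argue directly: the relation gives $m_1\theta_1 + m_2\theta_2 = (m_1+m_2-2)\theta + 2\pi$, and if $\theta_1,\theta_2 > \theta$ then the left side exceeds $(m_1+m_2)\theta$, giving $(m_1+m_2)\theta < (m_1+m_2-2)\theta + 2\pi$, i.e. $2\theta < 2\pi$, i.e. $\theta < \pi$ — which is not a contradiction. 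So a cruder estimate is not enough, and instead I would use $m_1, m_2 \ge 2$ more carefully: from $\theta_1 \le \theta_2$ and the relation, $\theta_2$ is a weighted "upward correction," so $\theta_2 \ge \frac{(m_1+m_2-2)\theta + 2\pi}{m_1+m_2}$ once we push $\theta_1$ up to its max allowed by $\theta_1 \le \theta_2$; bounding $\theta_1 \le \theta_2$ gives $(m_1+m_2)\theta_2 \ge (m_1+m_2-2)\theta+2\pi$, hence $\theta_2 \ge \theta + \frac{2\pi - 2\theta}{m_1+m_2} \ge \theta + \frac{2\pi-2\theta}{m_1+m_2}$; since $\theta < \pi$ this is $\ge$ something $> \theta$ only when... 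Here is the genuinely delicate point, and I will need the extra slack. The cleanest route: rewrite the relation as $m_1(\theta - \theta_1) + m_2(\theta - \theta_2) = 2(\theta - \pi) < 0$, so $m_1(\theta-\theta_1) + m_2(\theta - \theta_2) < 0$. If $\theta_1 > \theta$ both terms are $\le 0$ only if... no — if $\theta_1 > \theta$ then $\theta - \theta_1 < 0$, and $\theta_2 \ge \theta_1 > \theta$ forces $\theta - \theta_2 < 0$ too, so the left side is negative, consistent. That still does not contradict. The real constraint must come from the \emph{upper} bound $\theta_i < \tfrac{m_i-1}{m_i}\pi$ on each angle, i.e. from the requirement that the areas be positive; I would feed those in to close the case.

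So the honest summary of the structure: (i) derive $m_1(\theta-\theta_1) + m_2(\theta-\theta_2) = 2(\theta-\pi)$ from Gauss–Bonnet and the two hypotheses; (ii) for (a), suppose $\theta_1 > \theta$, note $\theta_1 \le \theta_2$ forces $\theta_2 > \theta$, then the left side is $\le m_1(\theta - \theta_1) < 0$ and I bound $|\theta - \theta_1| \le \theta_1 < \pi$ against the area-positivity of $P_2$ (which bounds $m_2(\theta_2 - \theta)$ from above) to reach a numerical contradiction using $m_1, m_2 \ge 2$ and $\theta \ge \pi/2$; then $\theta_1 \le \theta$ plus the relation immediately gives $m_2(\theta_2 - \theta) = m_1(\theta - \theta_1) + 2(\pi - \theta) \ge 0$, hence $\theta_2 \ge \theta \ge \pi/2$. (iii) Part (b) is then essentially the second half of (ii): from $\theta_1 \le \theta$, the relation gives $m_2(\theta_2 - \theta) = m_1(\theta - \theta_1) + 2(\pi - \theta) \ge 0 + 0 = 0$ since $\theta \le \pi$, so $\theta_2 \ge \theta \ge \pi/2$ with no case analysis needed at all. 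The main obstacle I anticipate is part (a)'s first claim $\theta_1 \le \theta$: the naive monotonicity/averaging estimates are too weak, and I expect to need the precise area-positivity bounds $0 < \theta_i < \tfrac{m_i-1}{m_i}\pi$ together with the smallest cases $m_i = 2$ to nail it down, possibly with a short separate check for small $m_i$.
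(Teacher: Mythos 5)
There is a genuine gap, and it originates in your Gauss--Bonnet bookkeeping. With $m_1+m_2=n+2$, the area identity gives
\begin{equation*}
(2n-2)\pi-2n\theta=\bigl(2(m_1+m_2)-4\bigr)\pi-2(m_1\theta_1+m_2\theta_2)=2n\pi-2(m_1\theta_1+m_2\theta_2),
\end{equation*}
hence $m_1\theta_1+m_2\theta_2=n\theta+\pi$, equivalently $m_1(\theta-\theta_1)+m_2(\theta-\theta_2)=2\theta-\pi$. Your relation $m_1\theta_1+m_2\theta_2=n\theta+2\pi$, i.e. $m_1(\theta-\theta_1)+m_2(\theta-\theta_2)=2(\theta-\pi)$, is off by $\pi$, and this error drives the rest of the proposal off course. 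With the correct constant, the ``crude'' averaging argument you abandoned works at once and is exactly the paper's proof of the first claim in (a): if $\theta_1>\theta$ and $\theta_2\geq\theta_1>\theta$, then $n\theta+\pi=m_1\theta_1+m_2\theta_2>(m_1+m_2)\theta=(n+2)\theta$, so $\theta<\pi/2$, contradicting hypothesis (1). No area-positivity bounds $\theta_i<\tfrac{m_i-1}{m_i}\pi$ or small-$m_i$ checks are needed; your conclusion that the direct estimate ``is not enough'' is an artifact of the wrong constant, and your part (a) is in any case left as a sketch of a hoped-for numerical contradiction rather than a completed argument.

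The error also invalidates your treatment of (b) and of the second claim in (a). From the correct identity, $\theta_1\leq\theta$ gives only $m_2(\theta_2-\theta)=m_1(\theta-\theta_1)+\pi-2\theta$, whose right-hand side can be negative when $\theta>\pi/2$; your stronger conclusion $\theta_2\geq\theta$ is in fact false in general (take $m_1=m_2=m$, so $\theta_1+\theta_2=2\theta-\tfrac{2\theta-\pi}{m}$; with $\theta_1=\theta>\pi/2$ one gets $\theta_2<\theta$ while still $\theta_2\geq\pi/2$). The correct route, as in the paper, is: for (a), bound $m_1\theta_1+m_2\theta_2\leq(n+2)\theta_2$ and use $n\theta+\pi\geq(n+2)\tfrac{\pi}{2}$ to get $\theta_2\geq\pi/2$; for (b), the case $\theta_1\leq\theta_2$ follows from (a), and the case $\theta_2\leq\theta_1$ needs a short separate argument (assuming $\theta_2<\pi/2$ and $\theta_1\leq\theta$ yields $(n-m_1)\theta<(m_2-2)\tfrac{\pi}{2}$ with $n-m_1=m_2-2$, a contradiction), so your ``no case analysis needed'' claim is another symptom of the miscomputed identity rather than a genuine simplification.
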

\begin{proof}
From condition $(3)$, we have $m_{1}+m_{2}=n+2$. Now, using condition $(2)$ and the Gauss-Bonnet theorem, we have 
\begin{align*}
 \sum_{i=1}^{2}[\left(2m_{i}-2\right)\pi-2m_{i}\theta_{i}] &= \left(2n-2\right)\pi-2n\theta\\ \implies  \left(m_{1}+m_{2}\right)\pi-2\pi-\left(m_{1}\theta_{1}+m_{2}\theta_{2}\right) &= \left(n-1\right)\pi-n\theta\\ \implies  m_{1}\theta_{1}+m_{2}\theta_{2} &= n\theta+\pi.
\end{align*}
(a) Consider $\theta_1\leq \theta_2$. Assume that $\theta<\theta_{1}$. This implies $\theta<\theta_{2},$ as $\theta_{1}\leq\theta_{2}$. Now, we have $ \pi+n\theta = m_{1}\theta_{1}+m_{2}\theta_{2} > \left(m_{1}+m_{2}\right)\theta=\left(n+2\right)\theta$, which implies $\theta < \frac{\pi}{2}$. This contradicts condition (1) that $\theta\geq \frac{\pi}{2}$. Thus, we conclude that $\theta_{1}\leq\theta$.

Now, 
\begin{align*}
\pi+n\theta &= m_{1} \theta_{1} + m_{2} \theta_{2} \leq \left( n+2 \right) \theta_{2}\\ \implies  \left(n+2\right)\theta_2 & \geq \pi+\frac{n\pi}{2} = \frac{\pi}{2}\left(n+2\right)\\ \implies\theta_{2} &\geq \frac{\pi}{2}.
\end{align*}
(b) If $\theta_1\leq \theta_2$, then the assertion directly follows from (a).
In the remaining case, assume $\theta_2\leq \theta_1$. By switching the role of $\theta_1$ and $\theta_2$ in (a), we have $\frac{\pi}{2}\leq \theta_1.$ Towards contradiction, if $\theta_2< \frac{\pi}{2}$ and $\theta_1\leq \theta$, then $\theta_2<\frac{\pi}{2}\leq \theta_1 \leq \theta.$ But, we have $\pi+n\theta = m_1\theta_1+m_2\theta_2 < m_1\theta +m_2\frac{\pi}{2}$, which implies $(n-m_1)\theta < (m_2-2)\frac{\pi}{2}$. This implies $\theta < \frac{\pi}{2}$  as $m_2-2=n-m_1$, which contradicts condition (1) that $\theta\geq \frac{\pi}{2}.$ 
\end{proof}
In the next lemma (Lemma~\ref{lem:2}), we generalize Lemma~\ref{lem:1}. Suppose $P_{i}$'s are regular hyperbolic $2m_{i}$-gons, for $m_i\geq 2$, where $i=1,\dots,k$, and $P$ is a regular hyperbolic $2n$-gon with interior angle $\theta\geq \frac{\pi}{2}$, such that 
\begin{equation}\label{2.4}
\sum_{i=1}^{k}\mathrm{area}\left(P_{i}\right)=\mathrm{area}\left(P\right)\text{ and}
\end{equation}
\begin{equation}\label{2.5}
2n=4\left(1-k\right) +2\sum_{i=1}^{k}m_{i}.
\end{equation}
Suppose the interior angles of $P_{i}$'s are $\theta_{i}$, for $i=1,\dots,k$.
We define 
\begin{align*}
\theta_{\text{min}}  &=\min\left\{ \theta_{i}\mid i=1,\dots,k\right\}\text{ and} \\
\theta_{\max}  &=\max\left\{ \theta_{i}\mid i=1,\dots,k\right\}. 
\end{align*}

\begin{lemma}\label{lem:2} In the setting above, we have 
\begin{enumerate}
\item $\theta_{\min}\leq\theta$ and \item $\theta_{\max}\geq\frac{\pi}{2}$.
\end{enumerate}
\end{lemma}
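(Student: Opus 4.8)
The plan is to reduce, exactly as in the proof of Lemma~\ref{lem:1}, to a single linear relation among the interior angles. Since a regular hyperbolic $2m$-gon with interior angle $\phi$ has area $(2m-2)\pi-2m\phi$ by Gauss--Bonnet, summing over $i=1,\dots,k$ and using~\eqref{2.4} (with $\mathrm{area}(P)=(2n-2)\pi-2n\theta$) gives
\[
\sum_{i=1}^{k}\bigl[(2m_i-2)\pi-2m_i\theta_i\bigr]=(2n-2)\pi-2n\theta.
\]
Rewriting~\eqref{2.5} as $\sum_{i=1}^{k}m_i=n+2(k-1)$ and substituting, I expect the cross terms to collapse to the identity
\[
\sum_{i=1}^{k}m_i\theta_i=n\theta+(k-1)\pi,
\]
which for $k=2$ is precisely the relation $m_1\theta_1+m_2\theta_2=n\theta+\pi$ established inside Lemma~\ref{lem:1}.

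Granting this identity, both statements become one-line weighted-average arguments. For~(1), if $\theta_{\min}>\theta$ then $\theta_i>\theta$ for every $i$, so $n\theta+(k-1)\pi=\sum_{i=1}^{k}m_i\theta_i>\theta\sum_{i=1}^{k}m_i=n\theta+2(k-1)\theta$, whence $(k-1)\pi>2(k-1)\theta$; for $k\geq2$ this forces $\theta<\frac{\pi}{2}$, contradicting the hypothesis $\theta\geq\frac{\pi}{2}$. For~(2), if $\theta_{\max}<\frac{\pi}{2}$ then $n\theta+(k-1)\pi=\sum_{i=1}^{k}m_i\theta_i<\frac{\pi}{2}\sum_{i=1}^{k}m_i=\frac{\pi}{2}n+(k-1)\pi$, whence again $\theta<\frac{\pi}{2}$, a contradiction.

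The argument is short, so the only things to watch are bookkeeping points: checking that the substitution from~\eqref{2.5} really produces the clean identity above, handling the degenerate case $k=1$ in part~(1) (where the averaging step is vacuous, but there $P_1$ is a regular $2n$-gon with the same area as $P$, hence isometric to it, so $\theta_1=\theta$ and the claim is immediate), and keeping track of where the inequalities are strict. I do not anticipate any genuine obstacle beyond this; the essential content, as in Lemma~\ref{lem:1}, is simply the Gauss--Bonnet identity combined with the Euler-characteristic constraint.
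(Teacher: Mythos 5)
Your proof is correct and follows essentially the same route as the paper: both derive the identity $\sum_{i=1}^{k}m_i\theta_i=n\theta+(k-1)\pi$ from Gauss--Bonnet together with~\eqref{2.4} and~\eqref{2.5}, and then conclude by a weighted-average comparison using $\theta\geq\frac{\pi}{2}$. The only cosmetic difference is that you argue by contradiction (and so treat $k=1$ separately in part~(1), which you do correctly), whereas the paper runs the same averaging inequalities directly.
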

\begin{proof}
The proof of Lemma~\ref{lem:2} is similar to the proof of Lemma~\ref{lem:1}. By the Gauss-Bonnet formula, equations~\eqref{2.4} and~\eqref{2.5}, we have 
\begin{align*}
  \sum_{i=1}^{k}\left[ \left( 2m_{i} - 2 \right) \pi - 2m_{i} \theta_{i} \right] &= \left(2n-2\right)\pi-2n\theta\\
\implies  \pi\sum_{i=1}^{k}m_{i}-k\pi-\sum_{i=1}^{k}m_{i}\theta_{i} &= \left(n-1\right)\pi-n\theta\\
\implies  n\pi+\left(2k-2\right)\pi-k\pi-\sum_{i=1}^{k}m_{i}\theta_{i} &= n\pi-\pi-n\theta\\
\implies  \sum_{i=1}^{k}m_{i}\theta_{i} &= n\theta+k\pi-\pi.
\end{align*}

\begin{enumerate}
\item Using the inequality  $\theta_{\min} \left(\sum\limits_{i=1}^{k}m_{i}\right)\leq\sum\limits_{i=1}^{k}m_{i}\theta_{i}$ and equation~\eqref{2.5}, we have 
\begin{align*}
\theta_{\min}\left(n+2k-2\right) &\leq n\theta+\left(k-1\right)\pi\\ &\leq
n\theta+\left(2k-2\right)\theta\\ \implies\theta_{\min} &\leq \theta.
\end{align*}

\item Similarly, using the inequality $\theta_{\max}\left(\sum\limits_{i=1}^{k}m_{i}\right)\geq\sum\limits_{i=1}^{k}m_{i}\theta_{i}$ and equation~\eqref{2.5}, we have 
\begin{align*}
\theta_{\max}\left(n+2k-2\right) &\geq n\theta+\left(2k-2\right)\frac{\pi}{2}\\ &\geq \left(n+2k-2\right)\frac{\pi}{2} \\ \implies\theta_{\max} &\geq \frac{\pi}{2}.
\end{align*}
\end{enumerate}
This completes the proof.
\end{proof}
Now, we note that the proposition (Proposition~\ref{Thm1}) below is the key step in proof of the main theorem (Theorem~\ref{thm:main}).
\begin{prop}\label{Thm1}
Let $P$ be a regular hyperbolic $2n$-gon with interior angle $\theta\geq\frac{\pi}{2}$. Suppose $P_{i}$'s are regular hyperbolic $2m_{i}$-gons, for $m_i\geq 2$ and $i=1,2$, such that
\begin{enumerate}
\item $m_{1}+m_{2}=n+2$ and 
\item $\mathrm{area}\left(P_{1}\right)+\mathrm{area}\left(P_{2}\right)=\mathrm{area}(P)$.
\end{enumerate}
Then $$\mathrm{Perim}(P)\leq\mathrm{Perim}\left(P_{1}\right)+\mathrm{Perim}\left(P_{2}\right).$$ 
\end{prop}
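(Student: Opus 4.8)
Here is the approach I would take.

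The plan is to pass to the explicit formula for the perimeter of a regular hyperbolic polygon and to deduce the proposition from two monotonicity properties of that formula.

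First I would record the formula. Cutting the regular hyperbolic $2m$-gon of area $a$, hence of interior angle $\theta=\frac{(2m-2)\pi-a}{2m}$, into $4m$ congruent right triangles meeting at its centre, each such triangle has angles $\frac{\pi}{2m},\frac{\theta}{2},\frac{\pi}{2}$, and the leg opposite the central angle has length $\frac{s}{2}$, where $s$ is the side length. The right-triangle relation $\cos\frac{\pi}{2m}=\cosh\frac{s}{2}\sin\frac{\theta}{2}$ then yields
$$\Pi(m,a):=\mathrm{Perim}=4m\operatorname{arccosh}\!\left(\frac{\cos(\pi/(2m))}{\cos((2\pi+a)/(4m))}\right),$$
valid for $0\le a<(2m-2)\pi$, where from now on $m$ is allowed to be any real number $\ge 2$. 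A one-line computation gives $\partial_a\Pi(m,a)=\coth\frac{s}{2}\cot\frac{\theta}{2}>0$, so $\Pi(m,\cdot)$ is strictly increasing, with $\Pi(m,0)=0$ and $\partial_a\Pi(m,a)\to+\infty$ as $a\to0^+$.

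The reduction is then as follows. From $m_1+m_2=n+2$ and $m_i\ge 2$ we get $m_1,m_2\le n$, while the hypothesis $\theta\ge\frac{\pi}{2}$ is equivalent to $\mathrm{area}(P)=a_1+a_2\le(n-2)\pi$, writing $a_i=\mathrm{area}(P_i)$. I claim the proposition follows from: \emph{(I)} for each fixed area $a$, $\Pi(\cdot,a)$ is non-increasing in $m$; and \emph{(II)} $\Pi(n,\cdot)$ is superadditive on $[0,(n-2)\pi]$, i.e.\ $\Pi(n,a_1)+\Pi(n,a_2)\ge\Pi(n,a_1+a_2)$ whenever $a_1,a_2\ge0$ and $a_1+a_2\le(n-2)\pi$. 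Granting these, (I) gives $\mathrm{Perim}(P_i)=\Pi(m_i,a_i)\ge\Pi(n,a_i)$, and then (II) gives $\Pi(n,a_1)+\Pi(n,a_2)\ge\Pi(n,a_1+a_2)=\mathrm{Perim}(P)$, which is the assertion; note that the only way equality can be approached is $m_2=2$, $a_2\to0$, with $P_2$ shrinking to a point.

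Finally I would prove (I) and (II) from the explicit formula. For (I) one differentiates $\Pi(m,a)=4m\operatorname{arccosh}(R)$ in $m$, where $R=\cos(\pi/(2m))/\cos((2\pi+a)/(4m))$; here $R$ is decreasing in $m$ (which follows from convexity of $\tan$ on $(0,\frac{\pi}{2})$), and one estimates the two competing contributions $4\operatorname{arccosh}(R)$ and $4m\,\partial_m R/\sqrt{R^2-1}$ to conclude $\partial_m\Pi\le0$. For (II), the function $h(b):=\Pi(n,b)+\Pi(n,a-b)$ on $[0,a]$ satisfies $h(b)=h(a-b)$ and $h(0)=h(a)=\Pi(n,a)$, so it suffices to show $h$ is non-decreasing on $[0,\frac{a}{2}]$ for every $a\le(n-2)\pi$, that is, $\partial_a\Pi(n,b)\ge\partial_a\Pi(n,a-b)$ for $b\le a-b$; substituting $\partial_a\Pi(n,c)=\coth\frac{s(c)}{2}\cot\frac{\theta(c)}{2}$ and using that the side length $s(c)$ increases and the angle $\theta(c)$ decreases as the area $c$ grows, this becomes a one-variable estimate. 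This last step is the crux. The function $\Pi(n,\cdot)$ is concave near $0$ but convex for large areas, so one cannot merely invoke concavity, and superadditivity genuinely fails once the area is large (for instance $\Pi(4,\tfrac{15}{2})+\Pi(4,\tfrac{15}{2})<\Pi(4,15)$ for the regular octagon); the hypothesis $\theta\ge\frac{\pi}{2}$ enters precisely to bound $\cot\frac{\theta(c)}{2}\le1$ for all $c\le(n-2)\pi$, keeping $\partial_a\Pi(n,a-b)$ controlled, which together with the blow-up $\partial_a\Pi(n,b)\to+\infty$ as $b\to0^+$ forbids any interior minimum of $h$. Proving (I) is a similar, and somewhat easier, estimate.
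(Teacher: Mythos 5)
Your reduction to (I) and (II) is clean, and (I) is true (it is Lemma~\ref{lem:iso_ineq_cont} in the paper), but claim (II) is false in the range where you need it, so the argument has a fatal gap. After step (I) you have discarded the side constraint $m_{1}+m_{2}=n+2$ entirely, and the remaining statement --- superadditivity of $a\mapsto\Pi(n,a)$ on $[0,(n-2)\pi]$ --- fails once $n$ is large. Concretely, take $n=25$ and $a_{1}=a_{2}=11.5\pi$, so $a_{1}+a_{2}=23\pi=(n-2)\pi$ and $P$ is the right-angled regular $50$-gon (this split genuinely arises in the proposition, e.g.\ with $m_{1}=13$, $m_{2}=14$). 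With $\Pi(n,a)=4n\cosh^{-1}\bigl(\cos(\tfrac{\pi}{2n})/\sin(\tfrac{(2n-2)\pi-a}{4n})\bigr)$ one computes $\Pi(25,23\pi)=100\cosh^{-1}\bigl(\cos(\pi/50)/\sin(\pi/4)\bigr)\approx 87.9$, while $2\,\Pi(25,11.5\pi)=200\cosh^{-1}\bigl(\cos(\pi/50)/\sin(0.365\pi)\bigr)\approx 86.5<87.9$. The failure is structural: the right-angled regular $2n$-gon has perimeter about $2n\cosh^{-1}(3)\approx 3.53\,n$, whereas two regular $2n$-gons of half that area have interior angles near $3\pi/4$ and total perimeter about $4n\cosh^{-1}\bigl(1/\sin(3\pi/8)\bigr)\approx 3.23\,n$. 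You did notice that superadditivity fails for large areas, but the calibration is wrong: the superadditive range of $\Pi(n,\cdot)$ ends at an area comparable to its inflection point, whose interior angle is $\cos^{-1}\bigl(-1+2\sin(\pi/2n)\bigr)\to\pi$, i.e.\ at area of order $\sqrt{n}$, while $(n-2)\pi$ grows linearly; the hypothesis $\theta\ge\pi/2$ saves you only for small $n$ (for the octagon the critical area is $\approx 9.34>2\pi$, which is exactly what the paper exploits). For the same reason your proposed proof of (II) also fails on its own terms: for $a$ near $(n-2)\pi$ and $n$ large, the midpoint $a/2$ lies in the convex range of $\Pi(n,\cdot)$, so $h(b)=\Pi(n,b)+\Pi(n,a-b)$ is strictly decreasing just to the left of $b=a/2$ and is not monotone on $[0,a/2]$.

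The information you threw away is precisely what the paper's proof uses. Since $m_{1}+m_{2}=n+2$, the two pieces together have only $2n+4$ sides, far fewer (for an even split) than two copies of $2n$ sides, and a $2m$-gon of given area has strictly larger perimeter the smaller $m$ is. The paper keeps this constraint by fixing the interior angles $\theta_{1},\theta_{2}$ and varying the side split: $x\mapsto\phi_{c_{1}}(x)+\phi_{c_{2}}(n+2-x)$ is strictly concave (Lemma~\ref{lem:concave}), so its minimum is at an endpoint, and the endpoints are either a degenerate piece (handled by monotonicity in the number of sides, your (I)) or $m_{1}=2$; only in the latter case does a superadditivity-type estimate enter, and then only for the octagon (Proposition~\ref{prop:m1=00003D2} together with Corollary~\ref{cor:min_at_0}), where the relevant range $a\le 2\pi$ is safely below the critical value. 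To salvage your route you would have to retain the side constraint at the superadditivity step (comparing $P_{i}$ with polygons of roughly $2m_{i}$, not $2n$, sides); as written, the inequality you reduce to is false, so the proposal cannot be repaired by tightening estimates.
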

We will prove the main theorem in  \cref{sec:3}, assuming Proposition \ref{Thm1}. The proof of Proposition \ref{Thm1} can be found in \cref{sec:7}, after building up the requisite analysis in  \cref{sec:4}-\cref{sec:6}.

\section{Proof of Main Theorem}\label{sec:3}
In this section, we show that Proposition~\ref{Thm1} implies the main theorem (Theorem~\ref{thm:main}). 
Suppose $\mathcal{P}_g$ is a regular right-angled hyperbolic $(8g-4)$-gon and $P_{i}$'s are regular $2m_{i}$-gons, for $m_i\geq 2$, where $i=1, \dots, k$, satisfying equations~\eqref{2.4} and ~\eqref{2.5}. We prove the theorem stated below:
\begin{theorem}\label{prop:3.1}
$\mathrm{Perim}\left(\mathcal{P}_g\right)\leq\sum\limits_{i=1}^{k}\mathrm{Perim}\left(P_{i}\right).$
\end{theorem}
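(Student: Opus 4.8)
The plan is to prove Theorem~\ref{prop:3.1} by induction on the number $k$ of polygons, using Proposition~\ref{Thm1} as the engine that removes one polygon at a time. Write $2n=8g-4$, so that $\mathcal{P}_g$ is a regular right-angled $2n$-gon and equation~\eqref{2.5} reads $\sum_{i=1}^k m_i=n+2k-2$. For the base case $k=1$, equation~\eqref{2.5} forces $m_1=n$, so $P_1$ is a regular hyperbolic $2n$-gon with $\mathrm{area}(P_1)=\mathrm{area}(\mathcal{P}_g)$ by~\eqref{2.4}; since a regular hyperbolic $2n$-gon is determined up to isometry by its area (equivalently, by its interior angle, via Gauss--Bonnet), we get $P_1\cong\mathcal{P}_g$ and hence equality of perimeters.

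For the inductive step ($k\ge2$), the idea is to choose two indices $a\ne b$, replace $P_a$ and $P_b$ by a single regular hyperbolic $2n_Q$-gon $Q$ with $n_Q:=m_a+m_b-2$ and $\mathrm{area}(Q)=\mathrm{area}(P_a)+\mathrm{area}(P_b)$, apply the induction hypothesis to the family $\{Q\}\cup\{P_\ell:\ell\ne a,b\}$ of $k-1$ regular polygons, which still satisfies~\eqref{2.4} and~\eqref{2.5} and has $n_Q\ge2$, and then invoke Proposition~\ref{Thm1} with $P=Q$ and the pair $P_a,P_b$ to get $\mathrm{Perim}(Q)\le\mathrm{Perim}(P_a)+\mathrm{Perim}(P_b)$; chaining the two inequalities gives $\mathrm{Perim}(\mathcal{P}_g)\le\sum_{i=1}^k\mathrm{Perim}(P_i)$. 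The only hypothesis of Proposition~\ref{Thm1} that is not automatic here is that the interior angle of $Q$ be at least $\tfrac{\pi}{2}$, so the whole step reduces to choosing $a,b$ so that this holds, and checking that $Q$ then genuinely exists as a hyperbolic polygon.

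To carry this out, let $\theta_i$ be the interior angle of $P_i$ and set $x_i:=m_i\!\left(\theta_i-\tfrac{\pi}{2}\right)$. A Gauss--Bonnet computation on $Q$ shows that its interior angle is $\ge\tfrac{\pi}{2}$ if and only if $\mathrm{area}(P_a)+\mathrm{area}(P_b)\le\pi(n_Q-2)$, and expanding the two areas via Gauss--Bonnet and using $m_a+m_b=n_Q+2$ rewrites this exactly as $x_a+x_b\ge0$. On the other hand, the identity in the proof of Lemma~\ref{lem:2}, namely $\sum_{i=1}^k m_i\theta_i=n\theta+(k-1)\pi$, taken with $\theta=\tfrac{\pi}{2}$ and $\sum_i m_i=n+2k-2$ from~\eqref{2.5}, says precisely that $\sum_{i=1}^k x_i=0$. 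Hence, ordering the indices so that $x_1\ge x_2\ge\cdots\ge x_k$, we have $x_1\ge0$, and if $x_2<0$ then $x_1=-\sum_{i\ge2}x_i\ge-x_2$ because $x_i\le x_2<0$ for all $i\ge3$; either way $x_1+x_2\ge0$, so we take $\{a,b\}=\{1,2\}$. This combinatorial observation is the one genuinely delicate point. It also guarantees that $Q$ exists: $x_1+x_2\ge0$ rules out $m_a=m_b=2$ (a regular hyperbolic quadrilateral has interior angle $<\tfrac{\pi}{2}$, so $x_i<0$ there), whence $n_Q\ge3$ and $0<\mathrm{area}(P_a)+\mathrm{area}(P_b)\le\pi(n_Q-2)<(2n_Q-2)\pi$, which is the range in which a regular hyperbolic $2n_Q$-gon of the prescribed area exists. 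Everything else is routine bookkeeping with the Gauss--Bonnet and Euler identities.
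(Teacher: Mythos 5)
Your proposal is correct, and it reaches the theorem by a genuinely different organization of the same engine. Both you and the paper telescope Proposition~\ref{Thm1}, and both must certify that every merged polygon is non-acute; the difference lies in how that certification is done and in which pairs get merged. The paper orders the polygons by decreasing interior angle, merges them cumulatively into $\widetilde{P}_1\subset\cdots\subset\widetilde{P}_k=\mathcal{P}_g$, and proves the angle condition $\widetilde{\theta}_j\geq\frac{\pi}{2}$ by a \emph{downward} induction anchored at $\widetilde{P}_k=\mathcal{P}_g$, invoking Lemma~\ref{lem:2} (to compare $\theta_{k_0}$ with $\widetilde{\theta}_{k_0}$) and Lemma~\ref{lem:1}(b) at each step. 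You instead run a bottom-up induction on $k$, merge only the two polygons maximizing the excess $x_i=m_i\left(\theta_i-\frac{\pi}{2}\right)$, and certify the right-angle condition for the merged polygon directly from the zero-sum identity $\sum_i x_i=0$, which is exactly the paper's Gauss--Bonnet identity $\sum_i m_i\theta_i=n\theta+(k-1)\pi$ specialized to $\theta=\frac{\pi}{2}$, together with the elementary observation that the two largest terms of a zero-sum list have non-negative sum. Your bookkeeping checks out: the equivalence $\theta_Q\geq\frac{\pi}{2}\iff x_a+x_b\geq 0$, the exclusion of $m_a=m_b=2$ (so $Q$ exists with $n_Q\geq 3$), and the verification that the reduced family still satisfies~\eqref{2.4} and~\eqref{2.5} are all correct. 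What your route buys is that Lemmas~\ref{lem:1} and~\ref{lem:2} (and the paper's Lemma~\ref{lem3.2}) become unnecessary for this theorem, replaced by one linear identity and a greedy choice; what the paper's route buys is a single fixed merging order with the angle control propagated from the known right angle of $\mathcal{P}_g$, at the cost of the two auxiliary angle lemmas.
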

In light of the fact that the regular $n$-gon has the least perimeter among all hyperbolic $n$-gons with a fixed area (Bezdek~\cite{Bez}), Theorem~\ref{prop:3.1} implies Theorem~\ref{thm:main}.
\begin{proof}[Proof of Theorem~\ref{prop:3.1}]
Suppose $\theta_{i}$'s are the interior angles of $P_{i}$, for $i=1,\dots,k$. After re-indexing, if needed, we assume that  $$\theta_{1}\geq\theta_{2}\geq\dots\geq\theta_{k}.$$
We define regular hyperbolic $2\widetilde{m}_{j}$-gons $\widetilde{P}_{j}$, for $j=1,\dots,k,$ inductively as described below: 
\begin{enumerate}
\item For $j=1$, we have that $\widetilde{P}_{1}=P_{1}$. Here, $\widetilde{m}_{1}=m_{1}$ and $\mathrm{area}\left(\widetilde{P}_{1}\right)=\mathrm{area}\left(P_{1}\right)$. 
\item For $j\geq2$, the polygon $\widetilde{P}_j$ is defined by the requirements that $2\widetilde{m}_{j}=2\widetilde{m}_{j-1}+2m_{j}-4$ and $\mathrm{area}\left(\widetilde{P}_{j}\right)=\mathrm{area}\left(\widetilde{P}_{j-1}\right)+\mathrm{area}\left(P_{j}\right)$.
\end{enumerate}
Now, we prove Lemma~\ref{lem3.2} below which is used to complete the proof of Theorem~\ref{prop:3.1}.
\begin{lemma}\label{lem3.2}
The interior angle $\widetilde{\theta}_{j}$ of $\widetilde{P}_{j}$ satisfies $\widetilde{\theta}_{j}\geq\frac{\pi}{2}$, for each $1\leq j\leq k$. 
\end{lemma}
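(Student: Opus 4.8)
The plan is to prove Lemma~\ref{lem3.2} by induction on $j$, after first converting the recursive definition of $\widetilde P_j$ into closed forms for $\widetilde m_j$ and for the product $\widetilde m_j\widetilde\theta_j$. Unwinding the recursion $\widetilde m_1=m_1$, $2\widetilde m_j=2\widetilde m_{j-1}+2m_j-4$ gives immediately
\[
\widetilde m_j=\sum_{i=1}^{j}m_i-2(j-1),
\]
which is at least $2$ since each $m_i\geq2$. For the angle, I would apply the Gauss--Bonnet formula $\mathrm{area}(\widetilde P_j)=(2\widetilde m_j-2)\pi-2\widetilde m_j\widetilde\theta_j$ to the defining relation $\mathrm{area}(\widetilde P_j)=\mathrm{area}(\widetilde P_{j-1})+\mathrm{area}(P_j)$; using $\widetilde m_j=\widetilde m_{j-1}+m_j-2$ the $\pi$-terms almost cancel and one is left with $\widetilde m_j\widetilde\theta_j=\widetilde m_{j-1}\widetilde\theta_{j-1}+m_j\theta_j-\pi$, which telescopes (with base $\widetilde m_1\widetilde\theta_1=m_1\theta_1$) to
\[
\widetilde m_j\widetilde\theta_j=\sum_{i=1}^{j}m_i\theta_i-(j-1)\pi.
\]
Dividing by $\widetilde m_j>0$ and comparing with the first display, the desired inequality $\widetilde\theta_j\geq\frac\pi2$ is \emph{equivalent} to
\[
\sum_{i=1}^{j}m_i\Bigl(\theta_i-\tfrac\pi2\Bigr)\geq0.
\]

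So the whole lemma reduces to the nonnegativity of these partial sums. For $j=k$ it is in fact an equality: since $\mathcal P_g$ is right-angled, the computation in the proof of Lemma~\ref{lem:2} (with $\theta=\frac\pi2$) gives $\sum_{i=1}^{k}m_i\theta_i=\frac{n\pi}{2}+(k-1)\pi$, and equation~\eqref{2.5} gives $\sum_{i=1}^{k}m_i=n+2k-2$, so $\sum_{i=1}^{k}m_i(\theta_i-\frac\pi2)=0$. Now I would invoke the ordering $\theta_1\geq\theta_2\geq\dots\geq\theta_k$. Let $j_0$ be the largest index with $\theta_{j_0}\geq\frac\pi2$; this set is nonempty because Lemma~\ref{lem:2}(2) gives $\theta_1=\theta_{\max}\geq\frac\pi2$. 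For $j\leq j_0$ each summand $m_i(\theta_i-\frac\pi2)$ with $i\leq j$ is nonnegative, so the partial sum is nonnegative. For $j>j_0$, write the partial sum as the total minus the tail, $\sum_{i=1}^{j}=-\sum_{i=j+1}^{k}$; every index $i\geq j+1>j_0$ has $\theta_i<\frac\pi2$, so the tail is nonpositive and the partial sum is again nonnegative. This closes the induction.

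The only genuinely delicate point is this last step. Because the weights $m_i$ need not be monotone, one cannot argue that the sequence of partial sums is itself unimodal; instead one must use that the \emph{sign pattern} of $m_i(\theta_i-\frac\pi2)$ is an initial block of nonnegative terms followed by a block of negative terms, which is forced by the $\theta_i$ being sorted, and then estimate the head from the left and the tail from the right separately. Everything else is routine bookkeeping with Gauss--Bonnet and the combinatorial identities~\eqref{2.4}--\eqref{2.5}.
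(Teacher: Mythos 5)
Your proof is correct, and it is genuinely different from the one in the paper. The paper proves Lemma~\ref{lem3.2} by a downward induction starting from the base case $j=k$ (where $\widetilde{P}_k=\mathcal{P}_g$ is right-angled), using Lemma~\ref{lem:2} to get $\theta_{k_0}\leq\widetilde{\theta}_{k_0}$ and then Lemma~\ref{lem:1}(b) to descend from $\widetilde{\theta}_{k_0}\geq\frac{\pi}{2}$ to $\widetilde{\theta}_{k_0-1}\geq\frac{\pi}{2}$. You instead telescope the Gauss--Bonnet relation to obtain the closed forms $\widetilde{m}_j=\sum_{i\leq j}m_i-2(j-1)$ and $\widetilde{m}_j\widetilde{\theta}_j=\sum_{i\leq j}m_i\theta_i-(j-1)\pi$ (both of which I checked), so that the claim becomes the nonnegativity of the partial sums $s_j=\sum_{i\leq j}m_i(\theta_i-\frac{\pi}{2})$; the total $s_k$ vanishes because $\mathcal{P}_g$ is right-angled (equivalently via \eqref{2.4}--\eqref{2.5}), and the decreasing order $\theta_1\geq\dots\geq\theta_k$ fixed in the proof of Theorem~\ref{prop:3.1} forces the summands to be a nonnegative block followed by a negative block, so every $s_j$ is either a sum of nonnegative head terms or minus a nonpositive tail. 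This buys a shorter, essentially self-contained argument: it replaces the repeated invocations of Lemmas~\ref{lem:1} and~\ref{lem:2} by one identity plus a sign-pattern observation, and in fact your appeal to Lemma~\ref{lem:2}(2) for nonemptiness of the first block is dispensable, since $s_k=0$ with not all terms negative already forces $\theta_1\geq\frac{\pi}{2}$. What the paper's route buys is structural economy at the level of the whole argument: it reuses the already-proved Lemmas~\ref{lem:1}--\ref{lem:2} and keeps the induction in exactly the form in which Proposition~\ref{Thm1} is applied immediately afterwards. One small point of care in your version (shared with the paper, so not a gap): the identities presuppose that each $\widetilde{P}_j$ exists as a nondegenerate regular hyperbolic polygon; if one prefers, one can read your computation as defining $\widetilde{\theta}_j$ by the Gauss--Bonnet equation, and then $\widetilde{\theta}_j\geq\frac{\pi}{2}$ together with $\widetilde{m}_j\geq 2$ confirms existence a posteriori.
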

\begin{proof}[Proof of Lemma~\ref{lem3.2}]
The proof is by induction on $j$. \\
For the base case $j=k$, it is straightforward to see that $\widetilde{P}_{k}=\mathcal{P}_g$, as $\mathrm{area}\left(\widetilde{P}_{k}\right)=\mathrm{area}\left(\mathcal{P}_g\right)$ and $2\widetilde{m}_{k}=\left(\sum\limits_{i=1}^{k}2m_{i}\right)-4\left(k-1\right)$ which is equal to $(8g-4)$. So, the polygon $\widetilde{P}_{k}$ is isometric to $\mathcal{P}_g$ and by the hypothesis, we have $\widetilde{\theta}_{k}=\theta\geq\frac{\pi}{2}.$

Now, assume that the lemma is true for $k_0$, i.e. $\widetilde{\theta}_{k_{0}} \geq \frac{\pi}{2}$, for some $k_{0}\leq k$. 

To complete the induction, we show that $\widetilde{\theta}_{k_{0}-1} \geq \frac{\pi}{2}$. First, note that the polygons $P_{1},\dots,P_{k_{0}}$ and $P:=\widetilde{P}_{k_{0}}$ satisfy the conditions of Lemma~\ref{lem:2}:
\begin{enumerate}
\item The interior angle $\widetilde{\theta}_{k_{0}}$ of $P=\widetilde{P}_{k_0}$ satisfies $\widetilde{\theta}_{k_{0}}\geq\frac{\pi}{2}$. 
\item By definition of $\widetilde{P}_j$'s, we have   $\sum\limits_{i=1}^{k_{0}}\mathrm{area}\left(P_{i}\right)=\mathrm{area}\left(\widetilde{P}_{k_{0}}\right)$.
\item As $2\widetilde{m}_{j}=2\widetilde{m}_{j-1}+2m_{j}-4$, for $j=2,\dots,k_{0}$, we have $2\widetilde{m}_{k_{0}}=4\left(1-k_{0}\right)+2\sum\limits_{i=1}^{k_{0}}m_{i}.$
\end{enumerate}
Now, the definition $\theta_{k_{0}}=\min\left\{ \theta_{i}\mid i=1,\dots,k_{0}\right\}$ and Lemma~\ref{lem:2} imply $\theta_{k_{0}}\leq\widetilde{\theta}_{k_{0}}$.  \\ Finally, the polygons $\widetilde{P}_{k_{0}}, P_{k_{0}}$ and $\widetilde{P}_{k_{0}-1}$ satisfy the following:
\begin{enumerate}
\item The interior angle $\widetilde{\theta}_{k_{0}}$ of $\widetilde{P}_{k_{0}}$ satisfies $\widetilde{\theta}_{k_{0}}\geq\frac{\pi}{2}$,

\item $\mathrm{area}\left(\widetilde{P}_{k_{0}-1}\right)+\mathrm{area}\left(P_{k_{0}}\right)=\mathrm{area}\left(\widetilde{P}_{k_{0}}\right)$,

\item $2\widetilde{m}_{k_{0}-1}+2m_{k_{0}}=2\widetilde{m}_{k_{0}}+4$ and

\item $\theta_{k_{0}}\leq\widetilde{\theta}_{k_{0}}$. 
\end{enumerate}
Thus, by Lemma~\ref{lem:1}, we conclude that $\widetilde{\theta}_{k_{0}-1}\geq\frac{\pi}{2}$.
\end{proof}
Now, we complete the proof of Theorem~\ref{prop:3.1}. By Lemma~\ref{lem3.2}, the polygons $\widetilde{P}_{j},P_{j}$ and $\widetilde{P}_{j-1}$, for $2\leq j\leq k$, 
satisfy following:
\begin{enumerate}
\item The interior angle of $\widetilde{P}_{j}$ is $\widetilde{\theta}_{j}\geq\frac{\pi}{2}$,
\item $\mathrm{area}\left(\widetilde{P}_{j-1}\right)+\mathrm{area}\left(P_{j}\right)=\mathrm{area}\left(\widetilde{P}_{j}\right)$ and
\item $2\widetilde{m}_{j-1}+2m_{j}=2\widetilde{m}_{j}+4$.
\end{enumerate}
Thus, by Proposition~\ref{Thm1}, we conclude that 
\begin{equation*}
\mathrm{Perim}\left(\widetilde{P}_{j}\right)\leq\mathrm{Perim}\left(P_{j}\right)+\mathrm{Perim}\left(\widetilde{P}_{j-1}\right),
\end{equation*}
 for $2\leq j\leq k$, which implies
\begin{eqnarray*}
\mathrm{Perim}\left(\mathcal{P}_g\right)=\mathrm{Perim}\left(\widetilde{P}_{k}\right)\leq\sum_{i=1}^{k}\mathrm{Perim}\left(P_{i}\right).
\end{eqnarray*}
\end{proof}

\begin{cor}\label{cor:conj}
Let $M=M_{g}$ be a closed hyperbolic surface of genus $g$ and $\left(\alpha,\beta\right)$
be a filling pair of simple closed geodesics. Then 
\begin{equation*}
L_M\left( \alpha, \beta \right)= l_M\left(\alpha\right)+l_M\left(\beta\right)\geq\frac{m_{g}}{2},
\end{equation*}
 where $m_{g}$ is the perimeter of a regular right-angled hyperbolic $\left(8g-4\right)$-gon.
\end{cor}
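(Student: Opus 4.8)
The plan is to deduce the statement directly from the Main Theorem (Theorem~\ref{thm:main}), since a filling pair $(\alpha,\beta)$ exhibits $M_g$ as glued from a disjoint union of hyperbolic polygons whose areas and side-counts are exactly those prescribed there, and the length $L_M(\alpha,\beta)$ is, up to a factor of two, the total perimeter of these polygons.

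First I would cut $M_g$ open along $\alpha\cup\beta$. Because $(\alpha,\beta)$ is a filling pair, the complementary components $P_1,\dots,P_k$ are topological discs, and since $\alpha,\beta$ are geodesics in minimal position (no bigons), each $P_i$ is a geodesic hyperbolic $2m_i$-gon with $m_i\geq 2$, with sides alternately coming from $\alpha$ and from $\beta$. The Euler-characteristic count recalled in the introduction gives $8g-4 = 4(1-k)+2\sum_{i=1}^k m_i$, and additivity of area under cutting gives $\sum_{i=1}^k \mathrm{area}(P_i)=\mathrm{area}(M_g)=4\pi(g-1)$.

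Next I would record the length bookkeeping. The edges of the ribbon graph $\Gamma(\alpha,\beta)$ are the sub-arcs of $\alpha$ and $\beta$ between consecutive intersection points, so the total edge length equals $l_M(\alpha)+l_M(\beta)=L_M(\alpha,\beta)$; and each such edge occurs as a side of the polygons exactly twice (either in two distinct $P_i$, or twice in a single $P_i$). Hence $\sum_{i=1}^k \mathrm{Perim}(P_i)=2\,L_M(\alpha,\beta)$. I would then apply Theorem~\ref{thm:main} with $N=8g-4$ and $\mathcal{R}$ the regular $N$-gon of area $4\pi(g-1)$. By Gauss--Bonnet its interior angle $\theta$ satisfies $(8g-6)\pi-(8g-4)\theta=4\pi(g-1)$, i.e.\ $\theta=\tfrac{\pi}{2}$, so $\mathcal{R}$ is right-angled and in particular not acute; thus $\mathcal{R}=\mathcal{P}_g$ and $\mathrm{Perim}(\mathcal{R})=m_g$. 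Theorem~\ref{thm:main} then yields $\sum_{i=1}^k\mathrm{Perim}(P_i)\geq m_g$, and combining with the bookkeeping identity gives $2\,L_M(\alpha,\beta)\geq m_g$, as desired.

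There is no genuine obstacle here — this is a clean corollary — and the only points requiring care are: (i) verifying that the gluing produces polygons satisfying the hypotheses of Theorem~\ref{thm:main}, namely even-sidedness and $m_i\geq2$, both of which follow from minimal position; and (ii) checking that the regular $(8g-4)$-gon of the correct area is \emph{exactly} right-angled, so that the ``not acute'' hypothesis of Theorem~\ref{thm:main} applies and $\mathrm{Perim}(\mathcal{R})$ coincides with the constant $m_g$ appearing in the statement.
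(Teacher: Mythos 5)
Your proposal is correct and follows essentially the same route as the paper: cut along $\alpha\cup\beta$, use the Euler-characteristic and area bookkeeping together with the identity $\sum_i\mathrm{Perim}(P_i)=2L_M(\alpha,\beta)$, and compare with the right-angled regular $(8g-4)$-gon. The only cosmetic difference is that you invoke Theorem~\ref{thm:main} directly (which already absorbs the Bezdek regularization), whereas the paper's proof first replaces each complementary polygon by a regular one of the same area via Bezdek and then applies Theorem~\ref{prop:3.1}.
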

\begin{proof}
Let $M\setminus \left( \alpha \cup \beta \right) =\bigcup\limits_{i=1}^{k}\widehat{P}_{i},$ where $\widehat{P}_{i}$'s are hyperbolic $2m_{i}$-gons, for $m_{i}\geq2$, where $i=1, \dots, k$. We denote $P_{i}$ to be a regular hyperbolic $2m_{i}$-gon whose area is equal to $\text{area}\left(\widehat{P}_{i}\right)$. 
Then we have 
\begin{align*}
\mathrm{Perim}\left(P_{i}\right)&\leq\mathrm{Perim}\left(\widehat{P}_{i}\right)\\ \implies \sum_{i=1}^{k}\mathrm{Perim}\left(P_{i}\right)&\leq\sum_{i=1}^{k}\mathrm{Perim}\left(\widehat{P}_{i}\right)=2 L_M\left(\alpha,\beta\right).
\end{align*}
Now, Theorem~\ref{prop:3.1} implies that 
\begin{align*}
m_{g}&\leq\sum_{i=1}^{k}\mathrm{Perim}\left(P_{i}\right)\leq 2 L_M\left(\alpha,\beta\right)\\ \implies L_M\left( \alpha, \beta\right) &= l_M\left(\alpha\right)+l_M\left(\beta\right)\geq\frac{m_{g}}{2}.
\end{align*} 
\end{proof}

Now, we aim at proving Proposition~\ref{Thm1}.


\section{Generalization of the isoperimetric inequality} \label{sec:4}
The purpose of this section is to prove Proposition~\ref{prop:gen_ineq} which is essential in the subsequent sections to prove Proposition~\ref{Thm1}. For $n\geq 3$ and $0<a<(n-2)\pi$, by $P_n(a)$ we denote a regular hyperbolic $n$-gon with area $a$. The perimeter of $P_n(a)$ is given by (for a proof, see \cite{San}): 
\begin{equation}\label{eq:perim}
\mathrm{Perim}\left(P_{n}(a)\right)=2n\cosh^{-1}\left(\frac{\cos\left(\frac{\pi}{n}\right)}{\sin\left(\frac{\left(n-2\right)\pi-a}{2n}\right)}\right).
\end{equation}
For $a=0$, the polygon $P_n(a)$ is degenerate. In this case, the quantity $\mathrm{Perim}\left(P_{n}(a)\right)=0$. Note that for a fixed area $a>0$, the function $\mathrm{Perim}\left(P_{n}(a)\right)$ is strictly decreasing in $n$. Furthermore, for a fixed $n$ the function $\mathrm{Perim}\left( P_{n}(a) \right)$ is strictly increasing in $a$. We prove:
\begin{prop}\label{prop:gen_ineq}
For $n\geq4$, the function $g_n:[0, (n-2)\pi) \to \mathbb{R}$, defined by $$g_{n}(x)=\mathrm{Perim}\left(P_{n}(x)\right)-\mathrm{Perim}\left(P_{n+1}(x)\right),$$
is monotonically increasing in $x$.
\end{prop}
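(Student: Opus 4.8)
The plan is to differentiate $g_n$ and to exhibit $g_n'\ge 0$ by reducing it to an elementary trigonometric inequality. The first move is a reparametrization that decouples the dependence on the number of sides from the dependence on the area. Writing $\phi=\pi/n$ and setting $b=1+\tfrac{x}{2\pi}$ (so $x=2\pi(b-1)$), one checks that $\tfrac{(n-2)\pi-x}{2n}=\tfrac{\pi}{2}-b\phi$, so that \eqref{eq:perim} becomes
\[
\mathrm{Perim}(P_n(x))=\Phi_b(\pi/n),\qquad \Phi_b(\phi):=\frac{2\pi}{\phi}\,\cosh^{-1}\!\Big(\frac{\cos\phi}{\cos(b\phi)}\Big),
\]
valid exactly for $0<\phi<\tfrac{\pi}{2b}$, i.e. for $0\le x<(n-2)\pi$. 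Since $b$ depends only on $x$, we get $g_n(x)=\Phi_b(\pi/n)-\Phi_b(\pi/(n+1))$ and hence $g_n'(x)=\tfrac{1}{2\pi}\big(\Psi_b(\pi/n)-\Psi_b(\pi/(n+1))\big)$ with $\Psi_b:=\partial\Phi_b/\partial b$. As $\pi/(n+1)<\pi/n<\tfrac{\pi}{2b}$ on the whole domain of $g_n$, it will suffice to show that $\phi\mapsto\Psi_b(\phi)$ is nondecreasing on $(0,\tfrac{\pi}{2b})$.

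Next I would compute $\Psi_b(\phi)=\dfrac{2\pi\cos\phi\,\tan(b\phi)}{\sqrt{\cos^2\phi-\cos^2(b\phi)}}$ and take the logarithmic derivative in $\phi$. Using the factorization $\cos^2\phi-\cos^2(b\phi)=\sin((b+1)\phi)\sin((b-1)\phi)$, the identity $b\cot(b\phi)+b\tan(b\phi)=\tfrac{2b}{\sin(2b\phi)}$, and the cotangent–sum identity $\tfrac{b+1}{2}\cot((b+1)\phi)+\tfrac{b-1}{2}\cot((b-1)\phi)=\tfrac{b\sin(2b\phi)-\sin(2\phi)}{\cos(2\phi)-\cos(2b\phi)}$, the inequality $\partial_\phi\log\Psi_b\ge0$ turns into a rational trigonometric inequality in $2\phi$ and $2b\phi$ all of whose denominators are positive on the relevant range. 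Clearing them and rewriting via half-angle formulas in terms of $\eta:=\phi$ and $\xi:=b\phi$, the whole thing collapses to
\[
\sin^4\xi+\frac{\eta}{\xi}\,\sin\xi\,\cos^3\xi\,\tan\eta\ \ge\ \sin^2\eta,\qquad 0<\eta\le\xi<\tfrac{\pi}{2}.
\]

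To prove this last inequality, fix $\xi$ and let $F(\eta)=\sin^4\xi+\tfrac{\eta}{\xi}\sin\xi\cos^3\xi\tan\eta-\sin^2\eta$. Then $F(\xi)=\sin^4\xi+\sin^2\xi\cos^2\xi-\sin^2\xi=0$, so it is enough to prove $F'\le0$ on $(0,\xi)$, which after simplification is the statement $\tfrac{\sin\xi\cos^3\xi}{\xi}\le\tfrac{2\sin\eta\cos^3\eta}{\eta+\sin\eta\cos\eta}$. Since $\sin(2\eta)\le 2\eta$, the right side is at least $\tfrac{\sin\eta\cos^3\eta}{\eta}=h(\eta)$, where $h(t):=\tfrac{\sin t\cos^3 t}{t}$; and $h$ is decreasing on $(0,\tfrac{\pi}{2})$ because $(\log h)'(t)=\big(\cot t-\tfrac{1}{t}\big)-3\tan t<0$ (the bracket is negative as $t\cos t-\sin t<0$ on $(0,\pi)$). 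Since $\xi\ge\eta$ gives $h(\xi)\le h(\eta)$, the displayed bound follows, so $F\ge0$ on $(0,\xi]$; this closes the reduction chain and proves the proposition.

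The main obstacle is the middle paragraph: organizing the trigonometric identities so that the a priori unwieldy expression $\partial_\phi\log\Psi_b$ collapses to the single clean inequality above. The cancellations hinge on spotting the factorization $\cos^2\phi-\cos^2(b\phi)=\sin((b+1)\phi)\sin((b-1)\phi)$ and on passing to the variables $\eta=\phi$, $\xi=b\phi$; once that reduction is in hand the remaining inequality is soft, resting only on $\sin(2\eta)\le2\eta$ and the monotonicity of $t\mapsto\sin t\cos^3 t/t$.
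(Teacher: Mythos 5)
Your argument is correct and is essentially the paper's proof in a different parametrization: your $\Psi_b(\pi/n)$ equals $2\pi\,h_n(x)$, so proving $\Psi_b$ nondecreasing in $\phi$ is the same reduction as the paper's claim that $H_x(t)=h_n(x)^2$ decreases in $t=\pi/\phi$, and the inequality you arrive at, $\sin^4\xi+\tfrac{\eta}{\xi}\sin\xi\cos^3\xi\tan\eta\ge\sin^2\eta$, is exactly the inequality of Lemma~\ref{lem:equation_5} multiplied through by $\cos^2\xi$. Even your proof of that inequality (fix $\xi$, note $F(\xi)=0$, and show $F'\le 0$ using $\sin 2\eta\le 2\eta$ together with the monotonicity of $\sin t\cos^3 t/t$) parallels the paper's one-variable calculus proof of Lemma~\ref{lem:equation_5}, which rests on the same endpoint bound $\sin y\cos y\le y$.
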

Now, we develop two technical lemmas which will be used in the proof of Proposition~\ref{prop:gen_ineq}.
\begin{lemma}\label{lem:equation_5}
Let $x\in\left(0,\frac{\pi}{4}\right]$ and  $y\in\left(x,\frac{\pi}{2}\right)$. Then, we have 
\begin{equation}\label{eq:x=000026y}
1+ \sin^{2}y<\frac{\cos^{2} x }{\cos^{2} y }+ \frac{x \tan x}{y \tan y } \sin^{2} y.
\end{equation}
\end{lemma}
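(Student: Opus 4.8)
The plan is to clear denominators and reduce \eqref{eq:x=000026y} to a one‑variable monotonicity statement, treating $x$ as a fixed parameter and $y$ as the variable on $\left(x,\frac{\pi}{2}\right)$.

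First I would multiply \eqref{eq:x=000026y} through by $\cos^2 y>0$ and use $\cos^2 y-\cos^2 x=\sin^2 x-\sin^2 y$; a rearrangement then shows that \eqref{eq:x=000026y} is equivalent to
\[
\sin^2 x-\sin^4 y<\frac{x\tan x}{y\tan y}\,\sin^2 y\cos^2 y .
\]
Since $\dfrac{\sin^2 y\cos^2 y}{y\tan y}=\dfrac{\sin y\cos^3 y}{y}$, this is in turn equivalent to $\psi(y)>0$, where, for fixed $x$,
\[
\psi(y):=x\tan x\cdot\frac{\sin y\cos^3 y}{y}+\sin^4 y-\sin^2 x,\qquad y\in\Bigl(x,\tfrac{\pi}{2}\Bigr).
\]
A direct check (using $\tan x\,\sin x\cos^3 x=\sin^2 x\cos^2 x$) gives $\psi(x)=\sin^2 x\cos^2 x+\sin^4 x-\sin^2 x=0$; this simply records that the two sides of \eqref{eq:x=000026y} agree as $y\to x^+$. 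So it suffices to prove $\psi'(y)>0$ on $\left(x,\frac{\pi}{2}\right)$.

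Next I would differentiate. A short computation gives
\[
\psi'(y)=x\tan x\cdot\frac{\cos^2 y\,Q(y)}{y^2}+4\sin^3 y\cos y,\qquad Q(y):=y\,(4\cos^2 y-3)-\sin y\cos y .
\]
If $Q(y)\ge 0$, then $\psi'(y)\ge 4\sin^3 y\cos y>0$ and we are done. If $Q(y)<0$, I would use that $t\mapsto t\tan t$ is strictly increasing on $\left(0,\frac{\pi}{2}\right)$, so $0<x\tan x<y\tan y$; since the coefficient $\frac{\cos^2 y\,Q(y)}{y^2}$ is negative, replacing $x\tan x$ by the larger $y\tan y$ decreases the expression, whence
\[
\psi'(y)>y\tan y\cdot\frac{\cos^2 y\,Q(y)}{y^2}+4\sin^3 y\cos y=\sin y\cos y\Bigl(1-\frac{\sin y\cos y}{y}\Bigr),
\]
the last equality after a short simplification, using $\frac{Q(y)}{y}+4\sin^2 y=4(\cos^2 y+\sin^2 y)-3-\frac{\sin y\cos y}{y}=1-\frac{\sin y\cos y}{y}$. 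Since $\sin y\cos y=\tfrac12\sin 2y<y$, the right‑hand side is positive, so $\psi'(y)>0$ in this case as well. Therefore $\psi(y)>\psi(x)=0$ for all $y\in\left(x,\frac{\pi}{2}\right)$, which is \eqref{eq:x=000026y}.

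The one genuine obstacle is the opening reduction: finding the form $\psi(y)>0$ and noticing $\psi(x)=0$. Once the inequality is in that shape, the monotonicity proof uses nothing beyond the two elementary facts that $t\tan t$ is increasing and $\sin 2y<2y$, with a case split on the sign of $Q(y)$ handling the one sign ambiguity. A direct term‑by‑term estimate of the original inequality is misleading: $\frac{\cos^2 x}{\cos^2 y}>1$ while $\frac{x\tan x}{y\tan y}<1$, and the crude bounds these suggest point the wrong way. (I also note that this argument never uses the hypothesis $x\le\frac{\pi}{4}$; it proves \eqref{eq:x=000026y} for all $0<x<y<\frac{\pi}{2}$.)
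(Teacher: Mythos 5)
Your argument is correct: the reduction to $\psi(y)>0$ after multiplying by $\cos^{2}y$ and using $\cos^{2}y-\cos^{2}x=\sin^{2}x-\sin^{2}y$ is right, $\psi(x)=0$ checks out, the formula $\psi'(y)=x\tan x\cdot\frac{\cos^{2}y\,Q(y)}{y^{2}}+4\sin^{3}y\cos y$ with $Q(y)=y\left(4\cos^{2}y-3\right)-\sin y\cos y$ is correct, and both branches of the sign split go through, with the bad case collapsing to $\sin y\cos y\left(1-\frac{\sin y\cos y}{y}\right)>0$. The route is genuinely different in execution from the paper's, though the skeleton is the same (equality in the degenerate case $x=y$ plus a one-variable monotonicity argument resting on $\sin 2t<2t$). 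The paper fixes $y$ and treats $x$ as the variable: since the right-hand side tends to $1+\sin^{2}y$ as $x\to y$, it suffices to show it is decreasing in $x$, and the derivative condition is rewritten as $\frac{\tan x+x\sec^{2}x}{\sin 2x}<\frac{y}{\sin y\cos^{3}y}$, which is settled by asserting that $\frac{\tan x+x\sec^{2}x}{\sin 2x}$ is increasing on $\left(0,\frac{\pi}{2}\right)$ and checking the case $x=y$, where it reduces to $\sin y\cos y\leq y$. You instead fix $x$, clear denominators first, and differentiate in $y$; the price is the case split on the sign of $Q(y)$, but the payoff is that the only auxiliary monotonicity you need is that of $t\tan t$, which is immediate, whereas the paper's increasingness of $\frac{\tan t+t\sec^{2}t}{\sin 2t}$ is stated without proof and is less obvious. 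Your closing observation is also accurate: your argument (and in fact the paper's derivative estimate as well) is valid for all $0<x<y<\frac{\pi}{2}$, so the hypothesis $x\leq\frac{\pi}{4}$ is not needed for the inequality itself; it is retained in the statement only because of how the lemma is invoked in Lemma~\ref{lem:der_inc}, where $\beta\in\left(0,\frac{\pi}{4}\right]$.
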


\begin{proof}
Note that $\lim\limits_{x\to y} \left(\frac{\cos^{2} x }{\cos^{2} y }+ \frac{x \tan x}{y \tan y } \sin^{2} y\right) = 1+\sin^2 y$. Therefore, to prove inequality~\eqref{eq:x=000026y}, it suffices to show $\frac{\cos^{2} x }{\cos^{2} y }+ \frac{x \tan x}{y \tan y } \sin^{2} y$ is monotonically decreasing in $x$ on $(0, \min\{\pi/4, y\})$, when $y$ is fixed. Equivalently, we show
\begin{align}
\frac{\partial}{\partial x}\left( \frac{\cos^{2}x}{\cos^{2}y}+ \frac{x\tan x}{y \tan y}\sin^{2} y \right) & < 0 \nonumber\\
\iff -\frac{\sin2x}{\cos^{2}y} + \frac{\sin y \cos y}{y} (\tan x+ x\sec^2 x) & < 0\nonumber \\
\iff \frac{\tan x+x\sec^{2}x}{\sin 2x} & <\frac{y}{\sin y\cos^{3} y }.\label{eq:der_x}
\end{align}
Now, note that $\frac{\tan x + x \sec^{2} x }{\sin 2x}$ is monotonically increasing on $\left(0,\frac{\pi}{2}\right)$ and $x<y$.
Therefore, it suffices to show the inequality below holds true:
\begin{align*}
\frac{\tan y + y\sec^{2}y}{2\sin y \cos y } & \leq \frac{y}{\sin y \cos^{3} y }\\
\iff \frac{\tan y }{2} + \frac{y}{2\cos^{2} y } & \leq \frac{y}{\cos^{2} y }\\
\iff \sin y \cos y  & \leq y,
\end{align*}
which is true for every $y\in\left(0,\frac{\pi}{2}\right)$. Hence, the proof is complete.
\end{proof}
\begin{lemma}\label{lem:der_inc}
For $x>0$, the function $H_x : \left(c_{x},\infty\right) \to \mathbb{R}$, defined by $$H_x\left(t\right)=\frac{\cos^2\left(\frac{\pi}{t}\right) \tan^2\left(\frac{c_{x}\pi}{2t}\right)}{\cos^{2}\left(\frac{\pi}{t}\right)-\cos^{2}\left(\frac{c_{x}\pi}{2t}\right)},$$ is monotonically decreasing in $t$, where $c_{x}=2+\frac{x}{\pi}$.
\end{lemma}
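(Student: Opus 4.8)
The plan is to establish $H_x'(t)<0$ on $(c_x,\infty)$ by regarding $H_x$ as a function of $p=\cos(\pi/t)$ and $q=\cos(c_x\pi/(2t))$. For $t>c_x$ one has $0<\pi/t<c_x\pi/(2t)<\pi/2$, so $0<q<p<1$ (and in particular $p^{2}-q^{2}>0$, so $H_x$ is well defined), and a direct simplification gives
\[
H_x=\frac{p^{2}(1-q^{2})}{q^{2}(p^{2}-q^{2})}.
\]
Both $p$ and $q$ increase with $t$, with $p'(t)=(\pi/t^{2})\sin(\pi/t)>0$ and $q'(t)=(c_x\pi/(2t^{2}))\sin(c_x\pi/(2t))>0$. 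First I would compute $\partial H_x/\partial p=-2p(1-q^{2})/(p^{2}-q^{2})^{2}<0$ and check that $\partial H_x/\partial q$ has the same sign as $2q^{2}-q^{4}-p^{2}$. Setting $A=\pi/t$, $B=c_x\pi/(2t)$ and $\mu=c_x/2=B/A>1$, one has $2q^{2}-q^{4}-p^{2}=(1-p^{2})-(1-q^{2})^{2}=\sin^{2}A-\sin^{4}B$. Hence in $H_x'(t)=(\partial H_x/\partial p)\,p'(t)+(\partial H_x/\partial q)\,q'(t)$ the first summand is always strictly negative.

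Consequently, if $\sin^{2}A\le\sin^{4}B$ the second summand is $\le 0$ and we are done. The essential case is $\sin^{2}A>\sin^{4}B$, where, after substituting the expressions above and clearing common positive factors, $H_x'(t)<0$ is equivalent to the single inequality
\[
\sin A\,\sin B\,\cos^{3}B>\mu\,\cos A\,(\sin^{2}A-\sin^{4}B).
\]
Multiplying by $A$, dividing by $\cos A$ (so that $A\mu=B$ appears on the right), and then multiplying by $\tan B>0$, this rearranges to
\[
A\tan A\,\sin^{2}B\,\cos^{2}B+B\tan B\,(\sin^{4}B-\sin^{2}A)>0,
\]
which is precisely inequality~\eqref{eq:x=000026y} of Lemma~\ref{lem:equation_5}, obtained from it by multiplying through by $y\tan y\cos^{2}y$ and regrouping, applied with $x=A$ and $y=B$. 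Since $B=\mu A\in(A,\pi/2)$ always, the hypothesis on $y$ holds automatically, while the hypothesis $x\in(0,\pi/4]$ becomes $A=\pi/t\le\pi/4$, i.e. $t\ge 4$. Thus Lemma~\ref{lem:equation_5} closes the proof for all $t\ge 4$.

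It then remains to handle $c_x<t<4$, which is a bounded interval and is nonempty only when $c_x<4$, i.e. $x<2\pi$. On this range, in the hard case the constraint $\sin^{2}A>\sin^{4}B$ together with $A>\pi/4$ forces $B$ (equivalently $\mu$) to stay bounded away from its extreme value, so the displayed inequality is not tight and can be verified by a direct estimate on this compact set. (Alternatively, the argument in the proof of Lemma~\ref{lem:equation_5} in fact yields \eqref{eq:x=000026y} for every $x\in(0,y)$ rather than only for $x\le\pi/4$, which removes the need for this last step altogether.) I expect the main obstacle to be exactly this hard case --- the algebraic collapse to \eqref{eq:x=000026y} and the disposal of the leftover range $c_x<t<4$; the easy case and all of the differentiation are routine.
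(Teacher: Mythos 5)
Your proposal is correct and takes essentially the same route as the paper: differentiate $H_x$ and reduce $H_x'(t)<0$ to inequality~\eqref{eq:x=000026y} of Lemma~\ref{lem:equation_5} applied with $x=\pi/t$ and $y=c_x\pi/(2t)$, which is exactly the paper's substitution (the paper likewise confines itself to $t\ge 4$ so that $\pi/t\le\pi/4$). Your parenthetical remark is the right way to dispose of the leftover range $c_x<t<4$: the proof of Lemma~\ref{lem:equation_5} only uses $0<x<y<\pi/2$ and never the hypothesis $x\le\pi/4$, so no separate compact-set estimate is needed.
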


\begin{proof}
We show that $\frac{d}{dt}H_x(t) \leq0$. We have
\begin{align*}
H_x\left(t\right) &= \frac{\cos^2\left(\frac{\pi}{t}\right)\tan^2\left(\frac{c_x \pi}{2t}\right)}{\cos^{2}\left(\frac{\pi}{t}\right)-\cos^{2}\left(\frac{c_x\pi}{2t}\right)}= \frac{\tan^2\left(\frac{c_x\pi}{2t}\right)}{1-\frac{\cos^{2}\left(c_x\pi/2t\right)}{\cos^2\left(\pi/t \right)}}\\ \implies \frac{d}{dt}H_x(t) &= -\frac{2\pi c_x \tan\left(\frac{c_x \pi }{t} \right) \sec^{2}\left(\frac{c_x \pi }{t}\right)}{t^{2} \left( 1 - \frac{\cos^{2}\left(c_x \pi / t\right)}{\cos^{2}\left(\pi/t\right)}\right)} - \frac{2 \pi \tan^{2}\left(\frac{c_x \pi}{t}\right) \tan\left(\frac{\pi}{t}\right)\sec^{2}\left(\frac{\pi}{t}\right)\cos^{2}\left(\frac{c_x \pi }{t}\right)}{t^{2}\left(1-\frac{\cos^{2}\left(c_x\pi/t\right)}{\cos^{2}\left(\pi/t\right)}\right)^{2}}\\ & \ \ \ + \frac{2\pi c_x\tan^{2}\left(\frac{c_x\pi}{t}\right)\sec^{2}\left(\frac{\pi}{t}\right)\sin\left(\frac{c_x\pi}{t}\right)\cos\left(\frac{c_x\pi}{t}\right)}{t^{2}\left(1-\frac{\cos^{2}\left(c_x\pi/t\right)}{\cos^{2}\left(\pi/t\right)}\right)^{2}}.
\end{align*}
Therefore, we have that $ \frac{d}{dt}H_x(t) \leq 0$ if and only if $$c_x \sec^{2}\left(\frac{c_x \pi }{t}\right) + \sin\left(\frac{c_x\pi}{t}\right) \tan\left(\frac{\pi}{t}\right)\sec^{2}\left(\frac{\pi}{t}\right)\cos\left(\frac{c_x\pi}{t}\right) \geq c_x\sec^{2}\left(\frac{\pi}{t}\right)+c_x\sin^{2}\left(\frac{c_x\pi}{t}\right)\sec^{2}\left(\frac{\pi}{t}\right).$$ We define $\alpha=\frac{c_x \pi}{t}$ and $\beta=\frac{\pi}{t}$.  As $t\geq4$, we get $\beta\in\left( 0,\frac{\pi}{4}\right] $ and $\alpha\in\left(\beta,\frac{\pi}{2}\right)$. In this notation, it suffices to show:
\begin{align}
\alpha \tan \alpha \frac{\cos^{2} \beta}{\cos^{2} \alpha}+\beta \sin^{2}\alpha \tan\beta & \geq \alpha \tan\alpha + \alpha \sin^{2}\alpha\tan\alpha\nonumber \\
\iff \alpha\tan\alpha\left(\frac{\cos^{2}\beta}{\cos^{2}\alpha}+ \frac{\beta\tan(\beta)}{\alpha\tan \alpha }\sin^{2} \alpha \right) & \geq\alpha \tan\alpha\left(1+\sin^{2}\alpha\right)\nonumber \\
\iff \frac{\cos^{2}\beta}{\cos^{2}\alpha}+\frac{\beta\tan\beta}{\alpha\tan\alpha}\sin^{2}\alpha &\geq 1+\sin^{2}\alpha.\label{eq:beta}
\end{align}
By Lemma \ref{lem:equation_5}, we conclude that inequality~\eqref{eq:beta} is true.
\end{proof}
\begin{proof}[Proof of Proposition~\ref{prop:gen_ineq}]
For $x=0$ and $n\geq 4$, we have that $\mathrm{Perim}\left(P_n(x)\right)=0$. Therefore, we see that $g_{n}\left(0\right)=0$. 
Now, it suffices to show that $g_{n}'(x) \geq0$ for $x \in\left(0,\left(n-2\right)\pi\right)$. 

We define $h_n(x) = \frac{d}{dx}\mathrm{Perim}\left(P_n(x)\right)$. Then $h_n(x)=\frac{\cos\left(\frac{\pi}{n}\right)\tan\left(\frac{2\pi+x}{2n}\right)}{\sqrt{\cos^{2}\left(\frac{\pi}{n}\right)-\cos^{2}\left(\frac{2\pi+x}{2n}\right)}}.$ Now, we show that $g_{n}'(x)=h_n(x)-h_{n+1}(x) \geq0$, where $x \in\left(0,\left(n-2\right)\pi\right)$. Therefore, it suffices to show that for an arbitrary but fixed $x$, the function $h_n\left(x\right)$ decreases with $n$. Now, the function $h_n(x)$ decreases with $n$ if and only if $\left(h_n(x)\right)^2$ decreases with $n$, as $h_n(x)\geq0$ by the isoperimetric inequality. 

For a fixed $x$, the function $H_x: \left(2+\frac{x}{\pi},\infty\right) \to \mathbb{R}$ by $$H_x\left(t\right)=\left(\frac{\cos\left(\frac{\pi}{t}\right)\tan\left(\frac{2\pi+x}{2t}\right)}{\sqrt{\cos^{2}\left(\frac{\pi}{t}\right)-\cos^{2}\left(\frac{2\pi+x}{2t}\right)}}\right)^{2}.$$ 
is monotonically decreasing by Lemma~\ref{lem:der_inc}. Therefore $\left(h_n(x)\right)^2$,  which is the restriction of $H_x$ to $\mathbb{N}\cap\left(2+\frac{x}{\pi},\infty\right)$, is decreasing in $n$.
\end{proof}
We conclude this section by the corollary below:
\begin{cor}\label{cor:min_at_0}
Let $m\geq2$ and $a\in \left(0,(2m-2)\pi\right)$ be fixed. Consider the family of functions $f_{m,n}:[0,a)\to\mathbb{R}$, defined by  $$ f_{m,n}(x)=\mathrm{Perim}\left(P_{2m}(x)\right)+\mathrm{Perim}\left(P_{2\left(n+2-m\right)}\left(a-x\right)\right),$$ for $n\geq 2m$. If $f_{m,n_{0}}$ admits its minimum at $x=0$, for some $n_{0}$, then $f_{m,k}$ admits its minimum at $x=0$, for every $k\geq n_{0}$.
\end{cor}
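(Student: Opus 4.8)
The plan is to induct on $n$, the inductive step being: \emph{if $f_{m,n}$ attains its minimum on $[0,a)$ at $x=0$, then so does $f_{m,n+1}$}. Since the first summand $\mathrm{Perim}\left(P_{2m}(x)\right)$ of $f_{m,n}(x)$ does not depend on $n$, the difference
\begin{equation*}
\phi_n(x) := f_{m,n+1}(x) - f_{m,n}(x) = \mathrm{Perim}\left(P_{2(n+3-m)}(a-x)\right) - \mathrm{Perim}\left(P_{2(n+2-m)}(a-x)\right)
\end{equation*}
involves only the second polygon. Writing $N = 2(n+2-m)$, so that $2(n+3-m) = N+2$, I would use the telescoping identity
\begin{equation*}
\mathrm{Perim}\left(P_N(y)\right) - \mathrm{Perim}\left(P_{N+2}(y)\right) = g_N(y) + g_{N+1}(y),
\end{equation*}
where $g_N,g_{N+1}$ are the functions of Proposition~\ref{prop:gen_ineq}. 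Because $n\geq 2m\geq 4$ forces $N = 2(n+2-m)\geq 2(m+2)\geq 8$, both $g_N$ and $g_{N+1}$ are defined and, by Proposition~\ref{prop:gen_ineq}, monotonically increasing on their domains.

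Next I would record the elementary domain check that makes all of this legitimate: from $a < (2m-2)\pi$ and $N\geq 2m+4$ we get $a < (2m+2)\pi\leq (N-2)\pi$, so for every $x\in[0,a)$ the argument $a-x$ lies in $(0,a]\subseteq[0,(N-2)\pi)$, hence also in $[0,(N+1-2)\pi)$; likewise $x\in[0,a)\subseteq[0,(2m-2)\pi)$, so the first polygon is non-degenerate. Consequently $g_N(a-x)+g_{N+1}(a-x)$ is a well-defined function of $x$ on $[0,a)$, monotonically \emph{decreasing} in $x$ (the composition of an increasing function with $x\mapsto a-x$), and therefore
\begin{equation*}
\phi_n(x) = -\bigl(g_N(a-x)+g_{N+1}(a-x)\bigr)
\end{equation*}
is monotonically increasing in $x$; in particular $\phi_n(x)\geq\phi_n(0)$ for all $x\in[0,a)$.

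With this the induction closes at once: assuming $f_{m,n}(x)\geq f_{m,n}(0)$ for all $x\in[0,a)$, we obtain
\begin{equation*}
f_{m,n+1}(x) = f_{m,n}(x) + \phi_n(x) \geq f_{m,n}(0) + \phi_n(0) = f_{m,n+1}(0),
\end{equation*}
so $f_{m,n+1}$ also attains its minimum at $x=0$. Starting the induction at $n=n_0$ yields the assertion for every $k\geq n_0$. I do not expect a serious obstacle here: the only points demanding care are the index bookkeeping that keeps all the polygons non-degenerate with arguments in the correct half-open intervals, and checking that the relevant indices $N,N+1$ are at least $4$ so that Proposition~\ref{prop:gen_ineq} applies — both of which are consequences of $n\geq 2m\geq 4$ together with $a<(2m-2)\pi$.
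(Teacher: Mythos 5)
Your proof is correct and is essentially the paper's argument: induct from $n_0$, control the difference $f_{m,n+1}-f_{m,n}$ via Proposition~\ref{prop:gen_ineq} (applied to consecutive indices and composed with $x\mapsto a-x$), and add the two inequalities at $x=0$. Your only additions are making the telescoping $\mathrm{Perim}(P_N)-\mathrm{Perim}(P_{N+2})=g_N+g_{N+1}$ and the domain/index checks explicit, which the paper leaves implicit.
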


\begin{proof}
The proof is by induction. The base case $k=n_{0}$ is the hypothesis of the corollary. Assume for some $k\geq n_{0}$, we have that $f_{m,k}(0)=\min\left\{ f_{m,k}(x)\mid x\in[0,a)\right\} $. Now, Proposition \ref{prop:gen_ineq} implies that $\mathrm{Perim}\left(P_{2\left(k+2-m\right)}\left(a-x\right)\right)-\mathrm{Perim}\left(P_{2\left(k+3-m\right)}(a-x)\right)$ admits maximum at $x=0$. Therefore, the function $f_{m,k}(x)-f_{m,k+1}(x)$ admits maximum at $x=0$. Thus, we have
\begin{align*}
f_{m,k+1}\left(x\right) & =f_{m,k}(x)-\left(f_{m,k}(x)-f_{m,k+1}(x)\right)\\
 & \geq f_{m,k}(0)-\left(f_{m,k}(x)-f_{m,k+1}(x)\right)\\
 & \geq f_{m,k}(0)-\left(f_{m,k}(0)-f_{m,k+1}(0)\right)=f_{m,k+1}(0).
\end{align*}
This completes the proof.
\end{proof}


\section{Base Cases $n=2,3$}
In this section, we prove Proposition~\ref{Thm1} for the cases: $n=2$ and $3$ in Lemma~\ref{n=2,3}. First, we develop Lemma~\ref{lem:iso_ineq_cont}, next recall Theorem~\ref{thm:iso_ineq_discon}  and then finally we prove Lemma~\ref{n=2,3}. 

\begin{lemma} \label{lem:iso_ineq_cont}
For $t>0$, the function $p_{t}(x):\left(\frac{t}{\pi}+2,\infty\right)\to\mathbb{R}$, defined by $$p_{t}(x)=2x\cosh^{-1}\left(\frac{\cos\left(\frac{\pi}{x}\right)}{\sin\left(\frac{\left(x-2\right)\pi-t}{2x}\right)}\right),$$
 is decreasing in $x$. In particular, the function $\mathrm{Perim}\left(P_n\left(t\right)\right)$ is decreasing in $n$, for $n\geq3$.
\end{lemma}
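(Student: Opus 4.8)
The plan is to regard $p_t(x)$ as the perimeter formula~\eqref{eq:perim} read at \emph{fixed area $t$} as a function of the number of sides $x$, and to express it as an integral in the \emph{area} variable of the quantity $\frac{\partial}{\partial a}\mathrm{Perim}(P_x(a))$, whose dependence on the number of sides is already controlled by Lemma~\ref{lem:der_inc}. First observe that $\mathrm{Perim}(P_x(0)) = 0$: putting $a = 0$ in~\eqref{eq:perim} makes the argument of $\cosh^{-1}$ equal to $\cos(\pi/x)/\sin\!\big(\tfrac{\pi}{2} - \tfrac{\pi}{x}\big) = 1$. For fixed $x > t/\pi + 2$ the map $a \mapsto \mathrm{Perim}(P_x(a))$ is continuous on $[0,t]$, of class $C^1$ on $(0,t]$, with derivative $h_x(a) := \dfrac{\cos(\pi/x)\tan\!\left(\frac{2\pi+a}{2x}\right)}{\sqrt{\cos^{2}(\pi/x) - \cos^{2}\!\left(\frac{2\pi+a}{2x}\right)}}$ (this is exactly the function computed in the proof of Proposition~\ref{prop:gen_ineq}), and this derivative has an integrable singularity of order $a^{-1/2}$ at $a = 0$. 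Hence the fundamental theorem of calculus gives $p_t(x) = \mathrm{Perim}(P_x(t)) = \int_0^t h_x(a)\,da$.

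The key step is then to identify, for each fixed $a$, the function $\big(h_x(a)\big)^2$ with the function denoted $H_a(x)$ in Lemma~\ref{lem:der_inc} (take the subscript to be $a$, so $c_a = 2 + a/\pi$ and $c_a\pi/(2x) = (2\pi+a)/(2x)$; this is a direct match). Lemma~\ref{lem:der_inc} then says $\big(h_x(a)\big)^2$ is decreasing in $x$ on $(2 + a/\pi,\infty)$; since $h_x(a) > 0$ — the argument of $\cosh^{-1}$ exceeds $1$, or equivalently $\mathrm{Perim}(P_n(a))$ is strictly increasing in $a$ — the function $h_x(a)$ itself is decreasing in $x$ there. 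For every $a \in [0,t]$ the half-line $(2 + a/\pi,\infty)$ contains the domain $(t/\pi + 2,\infty)$ of $p_t$, so for $t/\pi + 2 < x_1 < x_2$ we get $h_{x_1}(a) \geq h_{x_2}(a)$ for all $a \in [0,t]$, strictly for $a > 0$. Integrating this pointwise inequality over $a \in [0,t]$ yields $p_t(x_1) = \int_0^t h_{x_1}(a)\,da > \int_0^t h_{x_2}(a)\,da = p_t(x_2)$, i.e.\ $p_t$ is strictly decreasing; restricting to integers $n \geq 3$ for which $P_n(t)$ is non-degenerate gives the ``in particular'' assertion about $\mathrm{Perim}(P_n(t))$.

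The only point that requires care is the first step, namely the identity $p_t(x) = \int_0^t h_x(a)\,da$ across the mild singularity of $h_x$ at $a = 0$. This is routine: write $\mathrm{Perim}(P_x(t)) - \mathrm{Perim}(P_x(\varepsilon)) = \int_\varepsilon^t h_x(a)\,da$ for $\varepsilon > 0$ (where the integrand is continuous), and let $\varepsilon \to 0^+$, using continuity of $\mathrm{Perim}(P_x(a))$ in $a$ at $a = 0$ together with the estimate $h_x(a) = O(a^{-1/2})$ to conclude the right-hand side converges; alternatively, substitute $a = u^2$ to make the integrand bounded on $[0,\sqrt t\,]$. Once the integral representation is in hand, the rest is immediate from Lemma~\ref{lem:der_inc} and requires no further estimates.
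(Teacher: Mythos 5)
Your proof is correct and rests on exactly the same key input as the paper's argument: Lemma~\ref{lem:der_inc} applied to the area-derivative $h_x(a)$ of the perimeter, together with the fact that $\mathrm{Perim}\left(P_x(0)\right)=0$. The only difference is packaging: the paper differentiates $p_t$ in $x$ and, via equality of mixed partials, shows $\frac{\partial}{\partial x}F(x_0,y)$ decreases in $y$ from the limiting value $0$ as $y\to 0^{+}$, whereas you integrate the pointwise inequality $h_{x_1}(a)\geq h_{x_2}(a)$ in the area variable, treating the integrable $a^{-1/2}$ singularity at $a=0$ explicitly; this is an equivalent, and if anything slightly cleaner, way to organize the same computation, since it replaces the paper's limit claim for $\frac{\partial}{\partial x}F$ at the boundary by a routine improper-integral check, which you carry out.
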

\begin{proof}
Let $\Omega=\left\{ \left(x,y\right)\in\mathbb{R}^{2}\mid x>2,0\leq y<\left(x-2\right)\pi\right\} $. Consider the function $F:\Omega \to \mathbb{R}$, defined by $$F\left(x,y\right)=2x\cosh^{-1}\left(\frac{\cos\left(\frac{\pi}{x}\right)}{\sin\left(\frac{\left(x-2\right)\pi-y}{2x}\right)}\right),$$ so that $p_{t}(x)=F(x,t)$.
The function $F$ is smooth in the interior of $\Omega$. Hence, by Lemma~\ref{lem:der_inc}, we have
\begin{align*}
\frac{\partial}{\partial y}\frac{\partial}{\partial x}F(x_0,y_0) =\frac{\partial}{\partial x}\frac{\partial}{\partial y}F(x_0,y_0) <0,
\end{align*}
where $(x_0, y_0)$ is an interior point of $\Omega$. Thus, for a fixed $x_{0}> 2$, the function $\frac{\partial}{\partial x}F\left(x_0,y\right)$ is decreasing in $y\in \left(0,\left(x_{0}-2\right)\pi\right)$. Now, $F\left(x,0\right)=0$ implies $\lim\limits_{y\to0^{+}}\frac{\partial}{\partial x}F\left(x,y\right)=0$. \\ Hence, for every $t>0$ and $x_0\in \left(\frac{t}{\pi}+2,\infty\right)$, we have $$p'_{t}\left(x_{0}\right)=\frac{\partial}{\partial x}F(x_0,t )<0,$$ proving $p_{t}(x)$ is decreasing in $x$.

Finally, we have that $\mathrm{Perim}\left(P_n\left(t\right)\right) = p_{t}(n)$ implies  $\mathrm{Perim}\left(P_n\left(t\right)\right)$ is decreasing in $n\geq 3$.
\end{proof}
\begin{theorem}~\cite{San}\label{thm:iso_ineq_discon} 
Let $P,P_{1}$ and $P_{2}$ be regular hyperbolic $k$-gons, for $k\geq3$, with $\mathrm{area}(P)=\mathrm{area}(P_1)+\mathrm{area}(P_2)$. If the interior angle $\theta$ of $P$ satisfies $\theta\geq\cos^{-1}\left(-1+2\sin\left(\pi/k\right)\right)$, then $$ \mathrm{Perim}\left(P_{1}\right)+\mathrm{Perim}\left(P_{2}\right)\geq\mathrm{Perim}\left(P\right).$$
\end{theorem}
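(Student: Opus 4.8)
The plan is to reparametrize regular $k$-gons by half their interior angle and reduce the claim to the concavity of a single one-variable function. By Gauss--Bonnet, a regular hyperbolic $k$-gon with interior angle $\psi$ has area $(k-2)\pi-k\psi$, so the hypothesis $\mathrm{area}(P)=\mathrm{area}(P_1)+\mathrm{area}(P_2)$ is equivalent to $\theta_1+\theta_2=\theta+\tfrac{(k-2)\pi}{k}$, where $\theta_i$ is the interior angle of $P_i$. Setting $\phi_{\max}=\tfrac{(k-2)\pi}{2k}$ and passing to half-angles $\Phi=\theta/2$ and $\Phi_i=\theta_i/2\in(0,\phi_{\max})$, this reads $\Phi_1+\Phi_2=\Phi+\phi_{\max}$. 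With $c:=\cos(\pi/k)$, formula~\eqref{eq:perim} gives $\mathrm{Perim}(P)=2k\,g(\Phi)$ and $\mathrm{Perim}(P_i)=2k\,g(\Phi_i)$, where $g(\phi)=\cosh^{-1}\!\big(c/\sin\phi\big)$ is defined, positive and strictly decreasing on $(0,\phi_{\max})$ with $g(\phi_{\max})=0$. Thus the theorem is equivalent to the inequality $g(\Phi_1)+g(\Phi_2)\ge g(\Phi)$ for $\Phi_1+\Phi_2=\Phi+\phi_{\max}$.

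Next I would pin down exactly where $g$ is concave. Writing $\cosh g=c/\sin\phi$ and differentiating gives the convenient identity $g'=-\coth(g)\cot\phi$; differentiating once more and using $\coth(g)>0$ yields the clean criterion $g''\le 0\iff \sinh^2 g\le\cos^2\phi\iff c^2\le\sin^2\phi\,(1+\cos^2\phi)$. Put $q(\phi)=\sin^2\phi\,(1+\cos^2\phi)$; then $q'(\phi)=2\sin(2\phi)\cos^2\phi\ge 0$ on $[0,\pi/2]$, so $q$ increases from $q(0)=0$ to $q(\pi/2)=1$, and hence $g$ is concave precisely on $[\phi^*,\phi_{\max})$, where $\phi^*$ is the unique solution of $q(\phi^*)=c^2$. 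Writing $s=\sin^2\phi^*$, the equation $s(2-s)=c^2$ has admissible root $s=1-\sqrt{1-c^2}=1-\sin(\pi/k)$, so $\cos 2\phi^*=1-2s=-1+2\sin(\pi/k)$, i.e. $2\phi^*=\cos^{-1}\!\big(-1+2\sin(\pi/k)\big)$. Since $q(\phi_{\max})=c^2(2-c^2)>c^2$ for every $k\ge 3$, we have $\phi^*<\phi_{\max}$, so the hypothesis is non-vacuous; in half-angle form it says exactly $\Phi\ge\phi^*$, which is precisely the statement that $g$ is concave on the whole interval $[\Phi,\phi_{\max}]$.

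The final step is a standard endpoint argument. From $\Phi_1+\Phi_2=\Phi+\phi_{\max}$ with each $\Phi_i<\phi_{\max}$ we get $\Phi_i>\Phi$, so both $\Phi_1$ and $\Phi_2$ lie in $[\Phi,\phi_{\max}]$, where $g$ is concave. Consider $\Psi(t)=g(t)+g(\Phi+\phi_{\max}-t)$ on $[\Phi,\phi_{\max}]$: its second summand is $g$ precomposed with a decreasing affine map whose image is again $[\Phi,\phi_{\max}]$, so $\Psi$ is a sum of two concave functions of $t$, hence concave. Therefore $\min_{[\Phi,\phi_{\max}]}\Psi=\min\{\Psi(\Phi),\Psi(\phi_{\max})\}=g(\Phi)+g(\phi_{\max})=g(\Phi)$, and in particular $g(\Phi_1)+g(\Phi_2)=\Psi(\Phi_1)\ge g(\Phi)$. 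Multiplying by $2k$ gives $\mathrm{Perim}(P_1)+\mathrm{Perim}(P_2)\ge\mathrm{Perim}(P)$.

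The routine-but-fiddly part is the second-derivative computation and the algebraic identification of $\phi^*$ with $\tfrac12\cos^{-1}(-1+2\sin(\pi/k))$; I expect the only point requiring genuine care to be verifying that the hypothesis on $\theta$ coincides exactly with the concavity threshold for $g$ (so that the statement is sharp) and that the two summand half-angles never leave the region where $g$ is concave — both of which come out of the inequality $\phi^*<\phi_{\max}$, which holds because $c=\cos(\pi/k)<1$ for every $k\ge 3$.
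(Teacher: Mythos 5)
Your proposal is correct, and it supplies a complete, self-contained proof of a statement that the paper itself does not prove but imports from~\cite{San}. I checked the key steps: the reduction via Gauss--Bonnet to $\Phi_1+\Phi_2=\Phi+\phi_{\max}$ with $\mathrm{Perim}=2k\,g(\phi)$, $g(\phi)=\cosh^{-1}\!\left(\cos(\pi/k)/\sin\phi\right)$; the identity $g'=-\coth(g)\cot\phi$ and the resulting criterion $g''\leq 0\iff \cos^2(\pi/k)\leq \sin^2\phi\,(1+\cos^2\phi)$; the identification of the threshold half-angle with $\tfrac12\cos^{-1}\!\left(-1+2\sin(\pi/k)\right)$ via $s(2-s)=c^2$; the verification $\phi^*<\phi_{\max}$; and the concave endpoint argument for $\Psi(t)=g(t)+g(\Phi+\phi_{\max}-t)$, using $g(\phi_{\max})=0$ and the fact that $\Phi_1,\Phi_2\in[\Phi,\phi_{\max}]$. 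All of these are sound (the only unremarked case, a degenerate summand with $\Phi_i=\phi_{\max}$, is trivially fine). It is worth noting how your route relates to the machinery the paper does develop: your concavity criterion is literally the same algebraic equation $\cos^2(\pi/k)=\sin^2(\cdot)\left(1+\cos^2(\cdot)\right)$ that appears, in the area variable and for $k=8$, in Lemma~\ref{lem:unique_sol}, and your concave-sum/endpoint-minimization step mirrors the strategy of \cref{sec:7} (Lemma~\ref{lem:concave} plus evaluation at the endpoints of the domain). What your argument buys beyond the citation is an explanation of the exact provenance of the threshold $\cos^{-1}\!\left(-1+2\sin(\pi/k)\right)$: it is precisely the inflection point of the regular $k$-gon perimeter in the half-angle (equivalently, area) parametrization, so the hypothesis is exactly the condition making the perimeter concave on the relevant range; equivalently, one can phrase your last step in the area variable, where concavity together with $\mathrm{Perim}=0$ at area $0$ gives the desired subadditivity directly.
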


\begin{lemma}\label{n=2,3}
Proposition~\ref{Thm1} is true for the cases: $n=2$ and $3$.
\end{lemma}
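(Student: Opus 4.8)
The plan is to deduce both base cases directly from the equal-sided disconnected isoperimetric inequality of Theorem~\ref{thm:iso_ineq_discon}, exploiting the arithmetic fact that the angle threshold $\cos^{-1}\bigl(-1+2\sin(\pi/k)\bigr)$ appearing there is at most $\frac{\pi}{2}$ exactly when $k\le 6$: it equals $\cos^{-1}(\sqrt2-1)<\frac{\pi}{2}$ for $k=4$ and $\cos^{-1}(0)=\frac{\pi}{2}$ for $k=6$. Since in the cases $n=2,3$ of Proposition~\ref{Thm1} the largest polygon $P$ has only $2n\le 6$ sides, the hypothesis $\theta\ge\frac{\pi}{2}$ is always enough to apply Theorem~\ref{thm:iso_ineq_discon} to $6$-gons (and a fortiori to $4$-gons).

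For $n=2$, condition $(1)$ forces $m_1=m_2=2$, so $P$, $P_1$, $P_2$ are all regular hyperbolic $4$-gons with $\mathrm{area}(P)=\mathrm{area}(P_1)+\mathrm{area}(P_2)$. Because $\theta\ge\frac{\pi}{2}>\cos^{-1}\bigl(-1+2\sin(\pi/4)\bigr)$, Theorem~\ref{thm:iso_ineq_discon} with $k=4$ applies verbatim and yields $\mathrm{Perim}(P_1)+\mathrm{Perim}(P_2)\ge\mathrm{Perim}(P)$.

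For $n=3$, condition $(1)$ forces $\{m_1,m_2\}=\{2,3\}$; after interchanging $P_1$ and $P_2$ if needed, $P_1$ is a regular $4$-gon while $P_2$ and $P$ are regular $6$-gons, with $\mathrm{area}(P_1)+\mathrm{area}(P_2)=\mathrm{area}(P)$ and interior angle $\theta\ge\frac{\pi}{2}$ of $P$. Since the side counts of $P_1$ and $P$ no longer match, I would first replace $P_1$ by the regular hyperbolic $6$-gon $P_1'$ of the same area, which is a genuine polygon because $\mathrm{area}(P_1)<2\pi<4\pi$. By the monotonicity of $\mathrm{Perim}\bigl(P_n(a)\bigr)$ in $n$ at fixed area (Lemma~\ref{lem:iso_ineq_cont}, or the remark following~\eqref{eq:perim}), we get $\mathrm{Perim}(P_1')\le\mathrm{Perim}(P_1)$. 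Now $P_1'$, $P_2$, $P$ are all regular hyperbolic $6$-gons whose areas sum correctly, and $\theta\ge\frac{\pi}{2}=\cos^{-1}\bigl(-1+2\sin(\pi/6)\bigr)$, so Theorem~\ref{thm:iso_ineq_discon} with $k=6$ gives $\mathrm{Perim}(P_1')+\mathrm{Perim}(P_2)\ge\mathrm{Perim}(P)$. Chaining the two inequalities, $\mathrm{Perim}(P_1)+\mathrm{Perim}(P_2)\ge\mathrm{Perim}(P_1')+\mathrm{Perim}(P_2)\ge\mathrm{Perim}(P)$, which is the desired bound.

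I do not expect a real obstacle: once Theorem~\ref{thm:iso_ineq_discon} and the side-monotonicity of the perimeter are in hand, the argument is pure bookkeeping, and the only verifications are the elementary evaluations $\cos^{-1}(\sqrt2-1)<\frac{\pi}{2}$ and $\cos^{-1}(0)=\frac{\pi}{2}$. The single mild subtlety is the degenerate endpoints, where $\mathrm{area}(P_1)=0$ or $\mathrm{area}(P_2)=0$ and the corresponding polygon collapses with perimeter $0$; both inequalities above persist in that limit, so nothing changes. Notably, none of the analytic machinery developed for the general case (in particular Corollary~\ref{cor:min_at_0}) is used here — that is reserved for the inductive step $n\ge 4$, where the threshold angle exceeds $\frac{\pi}{2}$ and Theorem~\ref{thm:iso_ineq_discon} is no longer directly applicable.
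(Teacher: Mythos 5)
Your proof is correct and follows essentially the same route as the paper: for $n=3$ you replace the quadrilateral by an equal-area regular hexagon using side-count monotonicity (Lemma~\ref{lem:iso_ineq_cont}) and then apply Theorem~\ref{thm:iso_ineq_discon} at the threshold angle $\frac{\pi}{2}=\cos^{-1}\left(-1+2\sin\left(\pi/6\right)\right)$, exactly as in the paper. The only cosmetic difference is at $n=2$, where the paper simply observes the case is vacuous (a regular hyperbolic quadrilateral with interior angle at least $\frac{\pi}{2}$ is degenerate, of area $0$), while you invoke Theorem~\ref{thm:iso_ineq_discon} with $k=4$ --- harmless, since under the hypothesis all three polygons collapse and every perimeter vanishes.
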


\begin{proof} 
Proposition \ref{Thm1} is vacuously true for $n=2$, as a regular hyperbolic quadrilateral with interior angle at least $\frac{\pi}{2}$ must be degenerate (with area $0$). Consider the case $n=3$. Assume that $m_{1}\leq m_{2}$ which implies $m_{1}=2$ and $m_{2}=3$. If $Q$ is the regular hyperbolic $6$-gon with area equal to $\mathrm{area}\left(P_{1}\right)$, then $\mathrm{Perim}\left(P_{1}\right)\geq\mathrm{Perim}\left(Q\right)$ (by Lemma~\ref{lem:iso_ineq_cont}). Furthermore, note that $\frac{\pi}{2}=\cos^{-1}\left(-1+2\sin\left(\frac{\pi}{6}\right)\right)$. So, Theorem~\ref{thm:iso_ineq_discon} implies $\mathrm{Perim}\left(P\right)\leq\mathrm{Perim}\left(Q\right)+\mathrm{Perim}\left(P_{2}\right).$ Hence, $$\mathrm{Perim}\left(P_{1}\right)+\mathrm{Perim}\left(P_{2}\right)\geq\mathrm{Perim}\left(Q\right)+\mathrm{Perim}\left(P_{2}\right)\geq\mathrm{Perim}\left(P\right).$$
\end{proof}
%


\section{Case $m_{1}=2$} \label{sec:6}

In this section, we prove Proposition~\ref{prop:m1=00003D2} and then as a corollary, we prove Proposition \ref{Thm1} for the case $m_{1}=2$. The case $m_2=2$ follows similarly by interchanging the role of $m_1$ and $m_2$. Before we proceed, recall that in Proposition \ref{Thm1}, it is assumed the interior angle $\theta$ of the $2n$-gon $P$ satisfies $\theta\geq\frac{\pi}{2}$. By the Gauss-Bonnet formula, this translates to $\text{area}\left(P\right)=a\leq\left(n-2\right)\pi$.

\begin{prop}\label{prop:m1=00003D2}
Let $m\geq4$ and $a\in (0,\left(m-2\right)\pi)$ be fixed. Consider $b=\min\{2\pi, a\}$. Then the function $f_{2,m}:\left[0, b\right)\to\mathbb{R}$,  defined (as in Corollary \ref{cor:min_at_0}) by $$f_{2,m}\left(x\right)=\mathrm{Perim}\left(P_{4}(x)\right)+\mathrm{Perim}\left(P_{2m}\left(a-x\right)\right),$$ admits the minimum at $x=0$.
\end{prop}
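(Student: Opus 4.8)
The plan is to reduce to the base case $m=4$ via Corollary~\ref{cor:min_at_0}, and then to establish that case by a direct analysis of the single‑variable function $f_{2,4}$. For the reduction, apply Corollary~\ref{cor:min_at_0} with its parameter $m$ taken to be $2$ and its $n$ playing the role of the present $m$ (its inductive proof applies verbatim on the interval $[0,b)$ on which $f_{2,m}$ is defined, since $P_{4}(x)$ is non‑degenerate only for $x<2\pi$); this shows it suffices to treat the smallest admissible index $m=4$. For $m=4$ we have $a<(m-2)\pi=2\pi$, so $b=\min\{2\pi,a\}=a$, and the claim becomes: for every $a\in(0,2\pi)$ the function
\[
f_{2,4}(x)=\mathrm{Perim}\left(P_{4}(x)\right)+\mathrm{Perim}\left(P_{8}(a-x)\right),\qquad x\in[0,a),
\]
attains its minimum at $x=0$.

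The strategy for this base case is to show that $f_{2,4}$ is strictly increasing on an initial sub‑interval and strictly decreasing on its complement, i.e.\ that $f_{2,4}$ is unimodal with an interior maximum. Granting this, the minimum of $f_{2,4}$ on $[0,a)$ is attained at an end: either at $x=0$, where $f_{2,4}(0)=\mathrm{Perim}(P_{8}(a))$, or in the limit $x\to a^{-}$, where $\mathrm{Perim}(P_{8}(a-x))\to 0$ and $\mathrm{Perim}(P_{4}(x))\to\mathrm{Perim}(P_{4}(a))$, both finite since $a<2\pi$. As $8>4\ge 3$, Lemma~\ref{lem:iso_ineq_cont} gives $\mathrm{Perim}(P_{8}(a))<\mathrm{Perim}(P_{4}(a))$, so the minimum is at $x=0$, as desired.

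It remains to prove unimodality. With $h_{n}=\tfrac{d}{dt}\mathrm{Perim}(P_{n}(t))$ as in \cref{sec:4}, we have $f_{2,4}'(x)=h_{4}(x)-h_{8}(a-x)$. The two endpoint limits come from the vanishing of the denominator in~\eqref{eq:perim}: at $x=0$ the argument $(2\pi+x)/8$ equals $\pi/4=\pi/n$ with $n=4$, so $h_{4}(0^{+})=+\infty$; as $x\to a^{-}$ the argument $(2\pi+(a-x))/16$ tends to $\pi/8=\pi/n$ with $n=8$, so $h_{8}(a-x)\to+\infty$. Hence $f_{2,4}'$ is positive near $0$ and negative near $a$, and unimodality amounts to: $f_{2,4}'$ has exactly one zero in $(0,a)$, equivalently the positive ratio $\rho(x)=\left(h_{4}(x)/h_{8}(a-x)\right)^{2}$, which satisfies $\rho(0^{+})=+\infty$ and $\rho(a^{-})=0$, crosses the value $1$ exactly once. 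Using the formula for $h_{n}$ from \cref{sec:4} one has $h_{n}(t)^{2}=\frac{c\,(1-w)}{w\,(c-w)}$ with $w=\cos^{2}((2\pi+t)/(2n))$ and $c=\cos^{2}(\pi/n)$, so $\log\rho$ becomes an explicit sum of logarithms of such expressions, whose derivative can be studied; the required monotonicity of $\log\rho$ reduces to an elementary trigonometric inequality, of the same flavour as Lemmas~\ref{lem:equation_5} and~\ref{lem:der_inc}. It is convenient to dispose first of the range $a\le 6\pi-8\cos^{-1}\!\left(-1+2\sin(\pi/8)\right)$: there the interior angle of $P_{8}(a)$ is at least $\cos^{-1}\!\left(-1+2\sin(\pi/8)\right)$, so, after replacing the quadrilateral $P_{4}(x)$ by the octagon of equal area (which only decreases perimeter, by Lemma~\ref{lem:iso_ineq_cont}), Theorem~\ref{thm:iso_ineq_discon} with $k=8$ gives $f_{2,4}(x)\ge\mathrm{Perim}(P_{8}(a))=f_{2,4}(0)$ outright.

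I expect the complementary range, $a$ close to $2\pi$ (where the interior angle of $P_{8}(a)$ is only slightly above $\pi/2$), to be the main obstacle. There neither $h_{4}(\cdot)$ nor $h_{8}(\cdot)$ is monotone on the relevant interval — each of $h_{4}(x)^{2}$ and $h_{8}(a-x)^{2}$ is $U$‑shaped — so $\rho$ itself need not be globally monotone, and the single‑crossing property must be extracted more carefully: one shows that $\rho$ stays strictly above $1$ until it enters its final decreasing stretch (for instance by bounding $\rho$ below by its value at the configuration where $h_{8}(a-x)$ is minimal, at which point $h_{4}(x)$ is comparatively large), and then that $\rho$ decreases strictly through $1$ on that last stretch. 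Once the single sign change of $f_{2,4}'$ is in hand, the endpoint comparison of the second paragraph completes the base case, and Corollary~\ref{cor:min_at_0} completes the proof of Proposition~\ref{prop:m1=00003D2}.
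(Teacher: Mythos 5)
Your reduction to $m=4$ via Corollary~\ref{cor:min_at_0} and your endpoint comparison (using Lemma~\ref{lem:iso_ineq_cont} to see $\mathrm{Perim}(P_8(a))<\mathrm{Perim}(P_4(a))$, so that an endpoint minimum must sit at $x=0$) match the paper and are fine. The problem is the central step: everything hinges on the claim that $f_{2,4}'(x)=h_4(x)-h_8(a-x)$ changes sign exactly once on $(0,a)$, and this is never proved. Your own discussion shows why it is not routine: the easy regime you dispose of via Theorem~\ref{thm:iso_ineq_discon} only covers $a\le 6\pi-8\cos^{-1}\!\left(-1+2\sin(\pi/8)\right)\approx 4.39$, while the proposition needs all $a<2\pi\approx 6.28$; precisely in the remaining range both $h_4(x)$ and $h_8(a-x)$ are non-monotone ($U$-shaped), so the ratio $\rho$ need not be monotone, and the passage ``one shows that $\rho$ stays strictly above $1$ until its final decreasing stretch \dots and then decreases strictly through $1$'' is a heuristic, not an argument — no inequality is exhibited that would force either assertion. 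The claim that the monotonicity of $\log\rho$ ``reduces to an elementary trigonometric inequality of the same flavour as Lemmas~\ref{lem:equation_5} and~\ref{lem:der_inc}'' is likewise asserted without any such inequality being identified. As it stands, the base case $m=4$, which is the whole content of the proposition beyond Corollary~\ref{cor:min_at_0}, is open in your write-up.

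Note that the paper avoids this derivative analysis entirely, and you could too: by Lemma~\ref{lem:iso_ineq_cont}, replace $P_4(x)$ by the octagon $P_8(x)$ of the same area, so it suffices to prove the superadditivity $\phi(x)+\phi(a-x)\ge\phi(a)$ for $\phi(x)=\mathrm{Perim}(P_8(x))$ and $a<2\pi$. This is done by a chord argument: Lemma~\ref{lem:unique_sol} shows $x\phi'(x)-\phi(x)$ has a unique zero $x_0\approx 9.34>2\pi$, hence $\phi(x)/x$ is strictly decreasing on $(0,x_0)$, so the graph of $\phi$ cannot meet the chord from the origin to $(a,\phi(a))$ in its interior; since $\phi'(0^+)=+\infty$, Lemma~\ref{lem:curve_above_chord} puts the graph above the chord and yields $\phi(x)+\phi(a-x)>\phi(a)$ for all $x\in(0,a)$. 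This sidesteps the single-crossing question for $f_{2,4}'$ altogether and covers the full range $a\in(0,2\pi)$ uniformly. If you prefer to keep your route, you must actually prove the single sign change of $h_4(x)-h_8(a-x)$ on the hard range $a\in(4.39,2\pi)$; until then the proposal has a genuine gap.
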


Now, we prove Lemma~\ref{lem:curve_above_chord} and  Lemma~\ref{lem:unique_sol} which are used in the proof of Proposition~\ref{prop:m1=00003D2}.
\begin{lemma}\label{lem:curve_above_chord}
Suppose $u:\left[a,b\right]\to\mathbb{R}$ is a continuous function such that the graph of $u$ does not intersect the chord $\overline{AB}$, joining $\left(a,u(a)\right)$ and $\left(b,u(b)\right)$, in the interior. If $u$ is differentiable on $(a, b)$ and $\lim\limits_{x\to a}u'(x)=+\infty$, then the graph of $u$ lies above the chord $\overline{AB}$. Furthermore, for any $x\in\left(0,b-a\right)$, we have $$u(a+x)+u\left(b-x\right)>u(a)+u(b).$$
\end{lemma}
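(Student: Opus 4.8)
The plan is to prove this as a convexity-type statement. The hypothesis that the graph of $u$ does not meet the open chord $\overline{AB}$ means the graph stays entirely on one side of that chord on $(a,b)$; I must rule out the possibility that it lies (weakly) below. First I would argue by contradiction: suppose the graph lies on or below the chord, i.e.\ $u(x)\le L(x)$ for all $x\in[a,b]$, where $L$ is the affine function with $L(a)=u(a)$, $L(b)=u(b)$. Since $u$ is differentiable on $(a,b)$, for $x$ slightly larger than $a$ the mean value theorem on $[a,x]$ gives a point $\xi_x\in(a,x)$ with $u'(\xi_x)=\frac{u(x)-u(a)}{x-a}\le\frac{L(x)-u(a)}{x-a}=L'$, the (finite) slope of the chord. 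Letting $x\to a^+$ forces $\xi_x\to a^+$, and along this sequence $u'(\xi_x)\le L'<+\infty$, contradicting $\lim_{x\to a}u'(x)=+\infty$. Hence the graph cannot lie on or below the chord, and since it avoids the open chord, it must lie strictly above: $u(x)>L(x)$ for all $x\in(a,b)$.

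Having established $u(x)>L(x)$ on $(a,b)$, the final inequality follows from the affinity of $L$. For $x\in(0,b-a)$ both $a+x$ and $b-x$ lie in the open interval $(a,b)$ (here one should note $a+x<b$ and $b-x>a$, both immediate from $0<x<b-a$), so $u(a+x)>L(a+x)$ and $u(b-x)>L(b-x)$. Adding these and using that $L$ is affine, so $L(a+x)+L(b-x)=L(a)+L(b)=u(a)+u(b)$, yields
\begin{equation*}
u(a+x)+u(b-x)>L(a+x)+L(b-x)=u(a)+u(b),
\end{equation*}
which is exactly the claim. If $a+x$ or $b-x$ happened to equal an endpoint (the boundary case $x=0$ or $x=b-a$, excluded by hypothesis) one would only get equality, which is why the statement restricts to $x\in(0,b-a)$.

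The only subtlety — and the step I would be most careful about — is the contradiction extracting $u'(\xi_x)\le L'$ with $\xi_x\to a$; one must make sure the points $\xi_x$ indeed accumulate at $a$ and not somewhere in the interior, which is guaranteed because $\xi_x\in(a,x)$ and $x\to a^+$. No further regularity of $u$ (such as convexity or a second derivative) is needed: continuity on $[a,b]$ plus differentiability on $(a,b)$ plus the one-sided derivative blow-up at $a$ and the non-crossing hypothesis are exactly what make the argument go through. This is essentially the geometric observation that a curve tangent to the vertical at its left endpoint, if it does not cross a chord, must bulge away from that chord; I would present it cleanly in the contradiction form above rather than invoking any heavier machinery.
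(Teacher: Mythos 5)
Your proof is correct and is essentially the paper's argument: both use the blow-up of $u'$ at $a$ (via the mean value theorem) together with continuity and the non-crossing hypothesis (an intermediate value theorem step) to force the graph strictly above the chord, and then the affinity of the chord, $L(a+x)+L(b-x)=L(a)+L(b)$, gives the final inequality. The only difference is cosmetic: the paper first shows $u>L$ just to the right of $a$ and extends by IVT, whereas you rule out the ``weakly below'' alternative by contradiction; you also spell out the last step, which the paper leaves implicit.
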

\begin{proof}
We define a new function $v:[a,b]\to\mathbb{R}$ by $v(x)=u(a)+\frac{u(b)-u(a)}{b-a} (x-a)$. Then the graph of $v$ is $\overline{AB}$. By the condition $\lim\limits_{x\to a}u'(x)=+\infty$, we have  $u(x)>g (x)$, where $ 0< x-a< \epsilon$, for some $\epsilon> 0$.
Now, if $u(x')\leq v(x')$ for some $x' \in (a, b)$,  then Intermediate Value Theorem implies $u(x_0)=v(x_0)$, for some $x_0\in\left(a,b\right)$. This contradicts that the graph of $u$ does not intersect the chord $\overline{AB}$ in the interior.  Thus, the graph of $u$ lies above $\overline{AB}$, and the inequality follow. 
\end{proof}

\begin{lemma} \label{lem:unique_sol}
Let $\phi:\left(0,6\pi\right)\to\mathbb{R}$ be defined by 
\[
\phi(x)=\mathrm{Perim}\left(P_{8}(x)\right)=16\cosh^{-1}\left(\frac{\cos\left(\frac{\pi}{8}\right)}{\sin\left(\frac{6\pi-x}{16}\right)}\right).
\]
Then the function $\Tilde{\phi}(x)=x\phi'(x)-\phi(x)$ has a unique root in $\left(0,6\pi\right)$.
\end{lemma}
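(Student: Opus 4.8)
The plan is to exploit the elementary identity $\Tilde{\phi}'(x)=x\,\phi''(x)$, which is immediate from $\Tilde{\phi}(x)=x\phi'(x)-\phi(x)$ and shows that on $(0,6\pi)$ the sign of $\Tilde{\phi}'$ agrees with that of $\phi''$. So the lemma reduces to a statement about the convexity of $\phi$. To make this workable I would pass to the angular variable $u=\frac{2\pi+x}{16}$, which increases from $\frac{\pi}{8}$ to $\frac{\pi}{2}$ as $x$ runs from $0$ to $6\pi$; since $\sin\!\left(\frac{6\pi-x}{16}\right)=\cos u$, the formula for $\phi$ becomes $\phi(x)=16\cosh^{-1}\!\left(\frac{c}{\cos u}\right)$, and differentiating gives $\phi'(x)=\frac{c\tan u}{\sqrt{c^{2}-\cos^{2}u}}$, where $c=\cos(\pi/8)$ and $c^{2}-\cos^{2}u>0$ precisely because $u>\pi/8$.

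The crux is to show that $\phi''$ changes sign exactly once on $(0,6\pi)$, from negative to positive. Since $\phi'>0$, the sign of $\phi''$ equals that of $\frac{d}{dx}(\phi')^{2}$, hence, as $dx=16\,du$, that of $\frac{d}{du}\!\left(\frac{c^{2}\tan^{2}u}{c^{2}-\cos^{2}u}\right)$. A direct differentiation yields the factorization
\[
\frac{d}{du}\!\left(\frac{\tan^{2}u}{c^{2}-\cos^{2}u}\right)=\frac{2\tan u}{\left(c^{2}-\cos^{2}u\right)^{2}}\left(\frac{c^{2}}{\cos^{2}u}+\cos^{2}u-2\right),
\]
so, $\tan u$ being positive on $\left(\frac{\pi}{8},\frac{\pi}{2}\right)$, the sign of $\phi''$ is the sign of $q(u):=\frac{c^{2}}{\cos^{2}u}+\cos^{2}u-2$. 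Setting $w=\cos^{2}u$, which decreases from $c^{2}$ to $0$ as $u$ grows from $\frac{\pi}{8}$ to $\frac{\pi}{2}$, gives $q=\frac{w^{2}-2w+c^{2}}{w}$; the numerator has roots $w=1\pm\sin(\pi/8)$, of which only $w_{0}=1-\sin(\pi/8)$ lies in the range $(0,c^{2})$ of $w$, since $0<1-\sin(\pi/8)<1-\sin^{2}(\pi/8)=c^{2}$, and $q>0$ for $w<w_{0}$ while $q<0$ for $w_{0}<w<c^{2}$. Transporting this back through $u$ and then $x$, there is a unique $x^{*}\in(0,6\pi)$ with $\phi''<0$ on $(0,x^{*})$ and $\phi''>0$ on $(x^{*},6\pi)$; hence $\Tilde{\phi}$ is strictly decreasing on $(0,x^{*})$ and strictly increasing on $(x^{*},6\pi)$.

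It then remains to read off the boundary behaviour. As $x\to0^{+}$ (so $u\to\frac{\pi}{8}^{+}$) the octagon degenerates: $\phi(x)\to16\cosh^{-1}(1)=0$, and since $c^{2}-\cos^{2}u$ vanishes to first order in $x$ one has $\phi'(x)=O\!\left(x^{-1/2}\right)$, so $x\phi'(x)\to0$ and therefore $\Tilde{\phi}(x)\to0$. As $x\to6\pi^{-}$ (so $u\to\frac{\pi}{2}^{-}$) the octagon becomes ideal: $\phi(x)\sim16\log\frac{1}{\cos u}$ while $\phi'(x)\sim\tan u\sim\frac{1}{\cos u}$, so $x\phi'(x)\sim\frac{6\pi}{\cos u}$ outgrows $\log\frac{1}{\cos u}$ and $\Tilde{\phi}(x)\to+\infty$. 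Assembling the pieces: $\Tilde{\phi}$ extends continuously to $[0,x^{*}]$ with $\Tilde{\phi}(0)=0$ and satisfies $\Tilde{\phi}'<0$ on $(0,x^{*})$, so the mean value theorem gives $\Tilde{\phi}<0$ on all of $(0,x^{*}]$; on $(x^{*},6\pi)$ it is strictly increasing from $\Tilde{\phi}(x^{*})<0$ to $+\infty$, hence vanishes exactly once there. This proves that $\Tilde{\phi}$ has a unique root in $(0,6\pi)$.

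The only genuinely delicate step is the derivative computation behind the displayed factorization, together with the verification that precisely one root of $w^{2}-2w+c^{2}$ falls inside the interval $(0,\cos^{2}(\pi/8))$; once that sign pattern for $\phi''$ is in place, the identity $\Tilde{\phi}'=x\phi''$, the monotonicity conclusion, and the two asymptotic limits are all routine.
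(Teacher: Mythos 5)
Your proof is correct. It shares the paper's central reduction --- the identity $\Tilde{\phi}'(x)=x\phi''(x)$, so everything hinges on showing $\phi''$ changes sign exactly once --- and your substitution $w=\cos^{2}u$ leading to the quadratic $w^{2}-2w+\cos^{2}(\pi/8)$ is, after the change of variable, the same equation the paper arrives at, namely $\cos^{2}(\pi/8)=\sin^{2}\left(\tfrac{6\pi-x}{16}\right)\left(1+\cos^{2}\left(\tfrac{6\pi-x}{16}\right)\right)$; you simply analyze it a bit more completely, reading off the sign of $\phi''$ on each side of the critical point where the paper contents itself with ``one can check that this is a local minimum.'' The genuine divergence is in the existence step: the paper evaluates $\Tilde{\phi}$ numerically at $x=9$ and $x=10$ (approximately $-0.45$ and $1.05$) to produce a root in $(9,10)$, whereas you obtain existence analytically from the boundary behaviour, combining $\Tilde{\phi}(x)\to 0$ as $x\to 0^{+}$ (so that the initial decrease forces $\Tilde{\phi}<0$ up to the critical point) with $\Tilde{\phi}(x)\to+\infty$ as $x\to 6\pi^{-}$. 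Your route is fully self-contained and numerics-free; what the paper's computation buys in exchange is a localization of the root, $x_{0}\approx 9.34>2\pi$, which is not part of the lemma's statement but is precisely the fact invoked later in the proof of Proposition~\ref{prop:m1=00003D2}, so within the paper's architecture the numerical check does extra work that your argument would have to supply separately (for instance by verifying the sign of $\Tilde{\phi}$ at $x=2\pi$).
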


\begin{proof}
We first show that $\Tilde{\phi}$ has a unique extremum in $\left(0,6\pi\right)$ (which would imply that $\Tilde{\phi}$ has at most one root) and then show that $\Tilde{\phi}$ has exactly one root. Now, a local extremum for $\Tilde{\phi}$ only occurs at roots of $\phi''$, since $\Tilde{\phi}'(x)=x\phi''(x)$. Now,
\begin{align*}
\phi''(x) & =\frac{\cos\left(\frac{\pi}{8}\right)\csc^{2}\left(\frac{6\pi-x}{16}\right)}{16\sqrt{\cos^{2}\left(\frac{\pi}{8}\right)-\sin^{2}\left(\frac{6\pi-x}{16}\right)}}-\frac{\cos\left(\frac{\pi}{8}\right)\cos^{2}\left(\frac{6\pi-x}{16}\right)}{16\left(\cos^{2}\left(\frac{\pi}{8}\right)-\sin^{2}\left(\frac{6\pi-x}{16}\right)\right)^{3/2}}.
\end{align*}
So $\phi''(x)=0$ if and only if
\begin{align*}
\frac{\cos\left(\frac{\pi}{8}\right)\csc^{2}\left(\frac{6\pi-x}{16}\right)}{16\sqrt{\cos^{2}\left(\frac{\pi}{8}\right)-\sin^{2}\left(\frac{6\pi-x}{16}\right)}} & =\frac{\cos\left(\frac{\pi}{8}\right)\cos^{2}\left(\frac{6\pi-x}{16}\right)}{16\left(\cos^{2}\left(\frac{\pi}{8}\right)-\sin^{2}\left(\frac{6\pi-x}{16}\right)\right)^{3/2}}\\
\iff\cos^{2}\left(\frac{\pi}{8}\right)-\sin^{2}\left(\frac{6\pi-x}{16}\right) & =\sin^{2}\left(\frac{6\pi-x}{16}\right)\cos^{2}\left(\frac{6\pi-x}{16}\right)\\
\iff\cos^{2}\left(\frac{\pi}{8}\right) & =\sin^{2}\left(\frac{6\pi-x}{16}\right)\left(1+\cos^{2}\left(\frac{6\pi-x}{16}\right)\right).
\end{align*}

Now, the function $x\mapsto\sin^{2}\left(\frac{6\pi-x}{16}\right)\left(1+\cos^{2}\left(\frac{6\pi-x}{16}\right)\right)$ is a bijection from $\left(0,6\pi\right)$ to its codomain
$\left(0,\sin^{2}\left(\frac{3\pi}{8}\right)\left(1+\cos^{2}\left(\frac{3\pi}{8}\right)\right)\right)$.
We can check that $\cos^{2}\left(\frac{\pi}{8}\right)$ lies in the codomain, so has a unique pre-image. In particular, there is a unique $x\in (0,6\pi)$ such that $\Tilde{\phi}'\left(x\right)=0$. 

Thus, $\Tilde{\phi}$ has a unique local extremum in $\left(0,6\pi\right)$ (one can check that this is a local minimum).
Since $\lim_{x\to0^{+}}\Tilde{\phi}(x)=0$, it follows that $\Tilde{\phi}$
can have at most one root in $(0,6\pi)$. We can check that $9\cdot\phi'(9)-\phi(9)\approx-0.45<0$
and $10\cdot\phi'(10)-\phi(10)\approx1.05>0$ to see that $\Tilde{\phi}$
has at least one root in the interval $(9,10)$, and hence exactly one root in $\left(0,6\pi\right)$.
One can compute the value of this root to be $x\approx9.34$.
\end{proof}

\begin{proof}[Proof of Proposition~\ref{prop:m1=00003D2}]
In light of Corollary~\ref{cor:min_at_0}, it suffices to prove the proposition for the case $m=4$.
As defined in Lemma~\ref{lem:unique_sol}, let $\phi(x)=\mathrm{Perim}\left(P_{8}(x)\right)$. Now, if $x_{0}\in(0,6\pi)$ such that the tangent to the graph of $\phi$ at $\left(x_{0},\phi\left(x_{0}\right)\right)$ passes through $\left(0,0\right)$ (see Figure~\ref{fig:1}), then $x_{0}$ is precisely a solution to the equation $\frac{\phi(x)}{x}=\phi'\left(x\right)$. By Lemma~\ref{lem:unique_sol}, the only solution to this equation is $x_{0}\approx9.34>2\pi$.

\begin{figure}[h] \label{fig:1}
\centering

\includegraphics[scale=0.35]{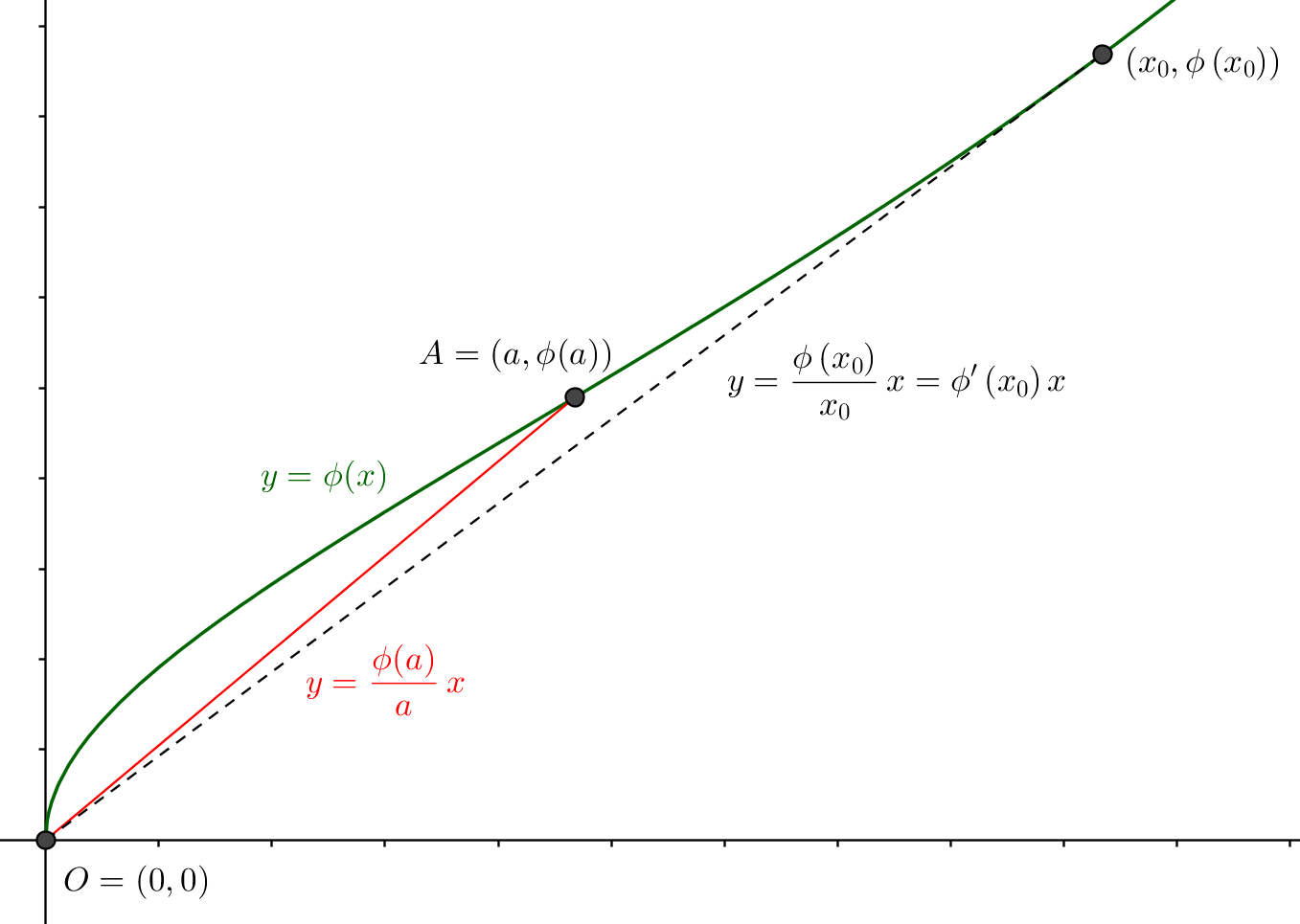}

\caption{The curve $y=\phi(x)$ is above the chord $\overline{OA}$ for $0<a<x_{0}$.}\label{fig:1}

\end{figure}



Fixing $a\in\left(0,x_{0}\right)$, consider the chord $\overline{OA}$
joining the points $O=\left(0,0\right)$ and $A=\left(a,\phi(a)\right)$. We
claim that the graph of $\phi$ does not intersect the chord $\overline{OA}$
in its interior (see Figure~\ref{fig:1}). By Lemma~\ref{lem:unique_sol}, the function $\frac{\phi(x)}{x}$ has a unique critical
point at $x=x_{0}$. We see that $x_{0}$ is in fact a local minimum for
$\frac{\phi(x)}{x}$, since the $\frac{d}{dx} \left(\frac{\phi(x)}{x}\right)<0$ at $x=9$ and $\frac{d}{dx} \left(\frac{\phi(x)}{x}\right)>0$ at $x=10$
(by the computations in the proof of Lemma~\ref{lem:unique_sol}). Thus, the function
$\frac{\phi(x)}{x}$ is strictly decreasing over $\left(0,x_{0}\right)$.
Now, if the graph of $\phi$ intersected the interior of $\overline{OA}$
at some point $Y=\left(y,\phi(y)\right)$,
then we have that $\frac{\phi(a)}{a}=\frac{\phi(y)}{y}$. This is
a contradiction as $a>y$.  Thus, the graph of $\phi$ does not intersect $\overline{OA}$ in its interior.

Since $\lim\limits_{x\to 0} \phi'(x) = +\infty$,  Lemma~\ref{lem:curve_above_chord} gives us that for $a\in\left(0,2\pi\right)\subset \left(0,x_{0}\right)$ and $x\in\left(0,a\right)$, we have $\phi(x)+\phi\left(a-x\right)>\phi(0)+\phi(a)=\phi(a)$. So, 
\begin{align*}
f_{2,4}(x) & =\mathrm{Perim}\left(P_{4}(x)\right)+\mathrm{Perim}\left(P_{8}(a-x)\right)\\
 & \geq\mathrm{Perim}\left(P_{8}(x)\right)+\mathrm{Perim}\left(P_{8}\left(a-x\right)\right)\\
 & \geq\mathrm{Perim}\left(P_{8}(a)\right)=f_{2,4}(0),
\end{align*}
shows that the function $f_{2,4}(x)$ is minimized at $x=0$. 
\end{proof}

\begin{cor} \label{cor:m1=2}
Proposition \ref{Thm1} holds true when $m_{1}=2$.
\end{cor}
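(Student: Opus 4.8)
The goal is to derive Corollary~\ref{cor:m1=2} --- that Proposition~\ref{Thm1} holds when $m_1 = 2$ --- from Proposition~\ref{prop:m1=00003D2}. The plan is to recall that Proposition~\ref{Thm1} concerns regular hyperbolic $2m_i$-gons $P_1, P_2$ with $m_1 + m_2 = n+2$ and $\mathrm{area}(P_1) + \mathrm{area}(P_2) = \mathrm{area}(P) = a$, where $P$ is the regular hyperbolic $2n$-gon with interior angle $\theta \geq \pi/2$; we must show $\mathrm{Perim}(P) \leq \mathrm{Perim}(P_1) + \mathrm{Perim}(P_2)$. Setting $m_1 = 2$, we have $m_2 = n$, so $P_2$ is a regular $2n$-gon, and $P_1$ is a regular hyperbolic quadrilateral ($2m_1 = 4$).

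\begin{proof}[Proof of Corollary~\ref{cor:m1=2}]
Assume $m_1 = 2$, so that $m_2 = n$ by condition (1) of Proposition~\ref{Thm1}. Write $a = \mathrm{area}(P)$; since the interior angle $\theta$ of the regular hyperbolic $2n$-gon $P$ satisfies $\theta \geq \frac{\pi}{2}$, the Gauss--Bonnet formula gives $a \leq (n-2)\pi$. If $a = 0$ then $P$ is degenerate, $\mathrm{Perim}(P) = 0$, and the inequality is trivial; so assume $a > 0$, hence $n \geq 3$. The case $n = 3$ is covered by Lemma~\ref{n=2,3}, so we may assume $n \geq 4$; in particular $m := n \geq 4$, matching the hypothesis of Proposition~\ref{prop:m1=00003D2}.

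Let $x = \mathrm{area}(P_1) \in [0, a)$ (we may assume $x < a$, since if $x = a$ then $P_2$ is degenerate and $\mathrm{Perim}(P_1) = \mathrm{Perim}(P_{4}(a)) \geq \mathrm{Perim}(P_{2n}(a)) = \mathrm{Perim}(P)$ by Lemma~\ref{lem:iso_ineq_cont}, so the inequality holds). Then $\mathrm{area}(P_2) = a - x$, and since $P_1 = P_4(x)$ and $P_2 = P_{2n}(a-x) = P_{2(n+2-m)}(a-x)$ with $m = n$, we recognise
\begin{equation*}
\mathrm{Perim}(P_1) + \mathrm{Perim}(P_2) = \mathrm{Perim}\left(P_4(x)\right) + \mathrm{Perim}\left(P_{2m}(a - x)\right) = f_{2,m}(x),
\end{equation*}
in the notation of Proposition~\ref{prop:m1=00003D2} (with $m = n \geq 4$). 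Now there are two cases. If $a \leq 2\pi$, then $b = \min\{2\pi, a\} = a$ and $x \in [0, a) = [0, b)$, so Proposition~\ref{prop:m1=00003D2} directly gives $f_{2,m}(x) \geq f_{2,m}(0)$. If instead $a > 2\pi$, then $b = 2\pi$; Proposition~\ref{prop:m1=00003D2} shows $f_{2,m}$ is minimised at $0$ on $[0, 2\pi)$, but one checks that the same argument extends to all of $[0, a)$: indeed, the proof of Proposition~\ref{prop:m1=00003D2} (via Corollary~\ref{cor:min_at_0} and the case $m = 4$) shows, using Lemma~\ref{lem:curve_above_chord} and that $x_0 \approx 9.34 > 2\pi$ is the unique critical point of $\phi(x)/x$ with $\phi(x)/x$ strictly decreasing on $(0, x_0)$, that $\mathrm{Perim}(P_{2m}(x)) + \mathrm{Perim}(P_{2m}(a-x)) \geq \mathrm{Perim}(P_{2m}(a))$ whenever $a < x_0$, and $\mathrm{Perim}(P_4(x)) \geq \mathrm{Perim}(P_{2m}(x))$ by Lemma~\ref{lem:iso_ineq_cont}; since $a \leq (n-2)\pi$ and we may take $n$ such that this bound, together with $a > 2\pi$, still leaves $a$ within the range handled, we conclude $f_{2,m}(x) \geq f_{2,m}(0)$ in all cases. (When $a \geq x_0$, one uses that $f_{2,m}$ is increasing near $x = a$ and the interior behaviour established above to reduce to the boundary.)

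Finally, $f_{2,m}(0) = \mathrm{Perim}\left(P_4(0)\right) + \mathrm{Perim}\left(P_{2m}(a)\right) = 0 + \mathrm{Perim}\left(P_{2n}(a)\right) = \mathrm{Perim}(P)$, since $m = n$ and $P = P_{2n}(a)$. Therefore $\mathrm{Perim}(P_1) + \mathrm{Perim}(P_2) = f_{2,m}(x) \geq f_{2,m}(0) = \mathrm{Perim}(P)$, which is the desired inequality. This proves Proposition~\ref{Thm1} in the case $m_1 = 2$. The case $m_2 = 2$ follows by interchanging the roles of $P_1$ and $P_2$.
\end{proof}

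The main obstacle I anticipate is the bookkeeping in the case $a > 2\pi$: Proposition~\ref{prop:m1=00003D2} is stated only on $[0, b)$ with $b = \min\{2\pi, a\}$, whereas the area $x = \mathrm{area}(P_1)$ can in principle range over all of $[0, a)$, and $a$ can be as large as $(n-2)\pi$. One must verify that the geometric input of Proposition~\ref{prop:m1=00003D2} --- the chord-lies-below-curve property for $\phi$ on $(0, x_0)$ with $x_0 \approx 9.34$, combined with the monotonicity $\mathrm{Perim}(P_4(x)) \geq \mathrm{Perim}(P_8(x))$ --- actually suffices to force the minimum at $x = 0$ on the full interval; since $x_0 > 2\pi$ this is not automatic and requires checking that the relevant areas stay within the controlled regime (or handling the tail $x$ near $a$ separately using that $\mathrm{Perim}(P_{2m}(a-x))$ blows up in slope as $x \to a$). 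Everything else is direct substitution into the notation of Proposition~\ref{prop:m1=00003D2} together with the elementary reductions at $n = 2, 3$ and the degenerate endpoints.
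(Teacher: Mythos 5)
Your overall route is the paper's: dispose of $n=2,3$ via Lemma~\ref{n=2,3}, and for $n\geq 4$ identify $\mathrm{Perim}(P_1)+\mathrm{Perim}(P_2)$ with $f_{2,n}(x)$, where $x=\mathrm{area}(P_1)$, and invoke Proposition~\ref{prop:m1=00003D2}. However, the case you split off at $a>2\pi$ contains a genuine gap. There you assert that ``the same argument extends to all of $[0,a)$'', but nothing you write establishes this: $n$ is determined by the data of Proposition~\ref{Thm1}, so ``we may take $n$ such that\dots'' is not an available move; the chord argument underlying Proposition~\ref{prop:m1=00003D2} is only proved for chords $\overline{OA}$ with $a<x_0\approx 9.34$, whereas $a$ can be as large as $(n-2)\pi$, which is unbounded in $n$; and, more basically, $f_{2,m}(x)$ is not even defined for $x\geq 2\pi$, since $P_4(x)$ requires $x<2\pi$. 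The parenthetical fallback (``$f_{2,m}$ is increasing near $x=a$\dots'') is likewise unsubstantiated.

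The gap, though, concerns an obstacle that does not exist, and seeing this is exactly what makes the paper's short proof work. When $m_1=2$, Gauss--Bonnet gives $x=\mathrm{area}(P_1)=2\pi-4\theta_1<2\pi$, and $x<a$ because $P_2$ has positive area; hence $x$ automatically lies in $[0,b)$ with $b=\min\{2\pi,a\}$, which is precisely the domain on which Proposition~\ref{prop:m1=00003D2} is stated. So no extension of that proposition is needed when $a>2\pi$: it applies verbatim and yields $f_{2,n}(x)\geq f_{2,n}(0)=\mathrm{Perim}\left(P_{2n}(a)\right)=\mathrm{Perim}(P)$. With that one observation replacing your second case (and rendering the endpoint discussion at $x=a$ unnecessary), your argument coincides with the paper's.
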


\begin{proof}
In Section~\ref{sec:4}, we have proved Proposition~\ref{Thm1} for the cases $n=2$ and $3$. Now, for $n\geq4$, Proposition~\ref{prop:m1=00003D2} implies $$f_{2,n}\left(a_{1}\right)=\mathrm{Perim}\left(P_{4}\left(a_{1}\right)\right)+\mathrm{Perim}\left(P_{2n}\left(a_{2}\right)\right)\geq\mathrm{Perim}\left(P_{2n}(a)\right),$$ which gives us the desired conclusion. 
\end{proof}


\section{The General Result} \label{sec:7}

In this section, we complete the proof of Proposition~\ref{Thm1}. We begin with a lemma which is used in the proof of the proposition. 

\begin{lemma}\label{lem:concave}
For $c>1$, the function $\phi_{c}:[a_{c},\infty)\to\mathbb{R}$, defined by $$ \phi_{c}(x)=2x\cosh^{-1}\left(c \cos\left(\frac{\pi}{2x}\right)\right),$$
 is strictly concave, where  $a_{c}=\frac{\pi}{2\cos^{-1}\left(1/c\right)}$.
\end{lemma}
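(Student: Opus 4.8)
The plan is to show $\phi_c'' < 0$ on $(a_c, \infty)$ by a direct computation, organized so that the messy terms collapse to a manageable inequality in a single variable. First I would substitute $\theta = \frac{\pi}{2x}$, so that $x = \frac{\pi}{2\theta}$ and as $x$ ranges over $(a_c, \infty)$, $\theta$ ranges over $(0, \cos^{-1}(1/c))$; on this range $c\cos\theta > 1$, so $\cosh^{-1}(c\cos\theta)$ is well-defined and the argument stays in the domain where $\cosh^{-1}$ is smooth. Writing $\phi_c = \frac{\pi}{\theta}\cosh^{-1}(c\cos\theta) =: \frac{\pi}{\theta} G(\theta)$, concavity in $x$ is equivalent to concavity of $\frac{1}{\theta}G(\theta)$ as a function of the decreasing variable $\theta$ — more precisely, since $x \mapsto \theta$ is a smooth decreasing diffeomorphism, I would instead just compute $\frac{d^2}{dx^2}\phi_c$ via the chain rule: with $u(x) = c\cos(\pi/2x)$ one has $\phi_c = 2x\cosh^{-1}(u)$, $u' = c\sin(\pi/2x)\cdot \frac{\pi}{2x^2}$, and
\[
\phi_c'(x) = 2\cosh^{-1}(u) + \frac{2x u'}{\sqrt{u^2-1}}, \qquad
\phi_c''(x) = \frac{4u'}{\sqrt{u^2-1}} + \frac{2x u''}{\sqrt{u^2-1}} - \frac{2x (u')^2 u}{(u^2-1)^{3/2}}.
\]
The first term is positive, so the concavity is not obvious termwise; the point is that the last term dominates.

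Next I would expand everything in terms of $\theta = \pi/2x$. Here $u = c\cos\theta$, and differentiating $u$ in $x$ introduces factors of $\theta$; it is cleanest to note $\frac{d}{dx} = -\frac{2\theta^2}{\pi}\frac{d}{d\theta}$. After clearing the common positive factor $\frac{2}{\sqrt{u^2-1}}\cdot(\text{positive})$ and the powers of $x$, the inequality $\phi_c'' < 0$ should reduce to an inequality of the shape
\[
\theta\,\bigl(c^2\cos^2\theta - 1\bigr)\cdot(\text{something}) \;<\; c^2\sin^2\theta\cos\theta \cdot \theta \cdot (\text{something}),
\]
i.e. after simplification a comparison between $\tan\theta$, $\theta$, and a rational function of $\cos\theta$ — exactly the type of one-variable trigonometric inequality handled in Lemma~\ref{lem:equation_5} and Lemma~\ref{lem:der_inc}. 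I would try to massage it into a form where the known inequality $\sin\theta\cos\theta \le \theta$ on $(0,\pi/2)$, together with $c\cos\theta > 1$, finishes it; if the elementary bound is not sharp enough, I would fall back on showing the relevant single-variable auxiliary function is monotone by differentiating once more and using that its value at an endpoint ($\theta \to 0$, where $\phi_c$ behaves like a multiple of $\sqrt{x}$-type growth, or $\theta \to \cos^{-1}(1/c)$, where $\cosh^{-1}$ blows up) has the right sign.

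The main obstacle I anticipate is the bookkeeping in the second derivative: there are three competing terms with different powers of $(u^2-1)$ in the denominator, and the naive common-denominator expansion is large. The key simplification to aim for is that $u'' $ and $(u')^2$ both carry a factor of $c^2$ and trigonometric functions of $\theta$, so after factoring and using $u^2 - 1 = c^2\cos^2\theta - 1$ the $c$-dependence should largely cancel or reduce to the single constraint $c\cos\theta > 1$; once that cancellation is made explicit, what remains is a clean trig inequality on $(0, \pi/2)$ that I expect to be either elementary or a direct consequence of the estimates already established in Section~\ref{sec:4}. A secondary point to be careful about is the left endpoint $a_c$: the lemma only claims concavity on the open-at-$a_c$ interval wording "$[a_c,\infty)$", so I would check that the formulas remain valid (and $\sqrt{u^2-1} \to 0$ only at $x = a_c$, where $\phi_c'(x) \to +\infty$, consistent with concavity) and that no division-by-zero issue invalidates the argument on the interior.
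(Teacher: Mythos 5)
Your approach --- differentiate twice and show $\phi_c''<0$ on $(a_c,\infty)$ --- is exactly the paper's, and your setup is correct: the formulas for $\phi_c'$ and $\phi_c''$ in terms of $u=c\cos\left(\frac{\pi}{2x}\right)$ are right, and so is the domain bookkeeping ($c\cos\left(\frac{\pi}{2x}\right)>1$ precisely for $x>a_c$, with $\phi_c'\to+\infty$ only at $a_c$). The one place where the write-up stops short is the step that carries the whole content of the lemma: you predict that beating the positive term $\frac{4u'}{\sqrt{u^2-1}}$ will require a one-variable trigonometric inequality in the spirit of Lemma~\ref{lem:equation_5} or Lemma~\ref{lem:der_inc} (or a fallback monotonicity argument), and you leave that reduction unexecuted. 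In fact no inequality is needed, because the positive term cancels exactly: with $u'=\frac{\pi c}{2x^{2}}\sin\left(\frac{\pi}{2x}\right)$ and $u''=-\frac{\pi c}{x^{3}}\sin\left(\frac{\pi}{2x}\right)-\frac{\pi^{2}c}{4x^{4}}\cos\left(\frac{\pi}{2x}\right)$ one gets
\[
4u'+2xu''=-\frac{\pi^{2}c}{2x^{3}}\cos\left(\frac{\pi}{2x}\right)<0,
\]
so your expression for the second derivative collapses to
\[
\phi_c''(x)=-\frac{\pi^{2}c\cos\left(\frac{\pi}{2x}\right)}{2x^{3}\sqrt{c^{2}\cos^{2}\left(\frac{\pi}{2x}\right)-1}}-\frac{2x\,(u')^{2}\,u}{\left(u^{2}-1\right)^{3/2}},
\]
a sum of manifestly negative terms on $(a_c,\infty)$ (note $0<\frac{\pi}{2x}<\frac{\pi}{2}$ there since $a_c>1$, and $u>1$). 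This is precisely what the paper's proof records, written as three manifestly negative terms. So your plan is sound and identical in spirit to the paper's; the anticipated hard trigonometric comparison is a misdiagnosis, and the proof closes by simply carrying out the cancellation above.
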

\begin{proof}
The proof of the lemma uses elementary calculus. We have
\begin{align*}
\phi'_{c}(x)  =&\frac{\pi c\sin\left(\frac{\pi}{2x}\right)}{x\sqrt{c^{2}\cos^{2}\left(\frac{\pi}{2x}\right)-1}}+2\cosh^{-1}\left(c \cos\left(\frac{\pi}{2x}\right)\right) \text{ and }\\
\phi''_{c}(x)  =&-\frac{\pi^{2}c^{2}\sin^{2}\left(\frac{\pi}{2x}\right)}{4x^{3}\left(c \cos\left(\frac{\pi}{2x}\right)-1\right)\sqrt{c^{2}\cos^{2}\left(\frac{\pi}{2x}\right)-1}}
  -\frac{\pi^{2}c^{2}\sin^{2}\left(\frac{\pi}{2x}\right)}{4x^{3}\left(c \cos\left(\frac{\pi}{2x}\right)+1\right)\sqrt{c^{2}\cos^{2}\left(\frac{\pi}{2x}\right)-1}}\\
  &-\frac{\pi^{2}c\cos\left(\frac{\pi}{2x}\right)}{2x^{3}\sqrt{c^{2}\cos^{2}\left(\frac{\pi}{2x}\right)-1}}.
\end{align*}
Thus $\phi''_{c}(x)<0$ for all $x\in \left(a_{c},\infty\right)$ which implies that $\phi_{c}$ is strictly concave.
\end{proof}
\begin{proof}[Proof of Proposition \ref{Thm1}]
This proof draws inspiration from \cite{Gas}. We want to show that $\mathrm{Perim}\left(P_{1}\right)+\mathrm{Perim}\left(P_{2}\right) \geq\mathrm{Perim}\left(P\right)$, or equivalently
\begin{equation}\label{eq:goal}
4m_{1}\cosh^{-1}\left(\frac{\cos\left(\frac{\pi}{2m_{1}}\right)}{\sin\left(\frac{\theta_{1}}{2}\right)}\right)+ 4m_{2}\cosh^{-1}\left(\frac{\cos\left(\frac{\pi}{2m_{2}}\right)}{\sin\left(\frac{\theta_{2}}{2}\right)}\right)  \geq 4n\cosh^{-1}\left(\frac{\cos\left(\frac{\pi}{2n}\right)}{\sin\left(\frac{\theta}{2}\right)}\right)
\end{equation}
by equation~\eqref{eq:perim}, where $\theta_{1},\theta_{2}$ are the interior angles of $P_{1}$ and $P_{2}$ respectively.

According to the notation of Lemma \ref{lem:concave}, let $c_{1}=1/\sin\left(\frac{\theta_{1}}{2}\right)$ and $c_{2}=1/\sin\left(\frac{\theta_{2}}{2}\right)$. Then $a_{{c_1}}=\frac{\pi}{\pi-\theta_{1}}$ and
$a_{c_{2}}=\frac{\pi}{\pi-\theta_{2}}$. In this notation, Equation \ref{eq:goal} is equivalent to $$ 4m_{1}\cosh^{-1}\left(\frac{\cos\left(\frac{\pi}{2m_{1}}\right)}{\sin\left(\frac{\theta_{1}}{2}\right)}\right)+4m_{2}\cosh^{-1}\left(\frac{\cos\left(\frac{\pi}{2m_{2}}\right)}{\sin\left(\frac{\theta_{2}}{2}\right)}\right)=\phi_{c_{1}}\left(m_{1}\right)+\phi_{c_{2}}\left(m_{2}\right).$$
Note that, given $m_{1}, n,\theta_{1}$ and $\theta_{2}$, the equations $m_{1}+m_{2}=n+2$ and $m_{1}\theta_{1}+m_{2}\theta_{2}=n\theta+\pi$ determine $m_{2}$ and $\theta$ uniquely. Furthermore, by the Gauss-Bonnet theorem and $\mathrm{area} \left(P_{1}\right)\geq0$, we have $\theta_{1}\leq\frac{2m_{1}-2}{2m_{1}}\pi$ which implies $m_{1}\geq a_{c_{1}}$. Similarly $m_{2}\geq a_{c_{2}}$.

Given $n, \theta_1$ and $\theta_2$, consider the function $\psi:\left[b_{1},b_{2}\right]\to\mathbb{R}$, defined by $$\psi(x)=\phi_{c_{1}}(x)+\phi_{c_{2}}\left(n+2-x\right),$$ where $b_{1}=\max\left\{ 2,a_{c_{1}}\right\}$ and $b_{2} = \min\left\{ n,n+2-a_{c_{2}}\right\}$. By Lemma~\ref{lem:concave}, the functions $\phi_{c_{1}}(x)$ and $\phi_{c_{2}}\left(n+2-x\right)$ are strictly concave, so $\psi$ is strictly concave. Therefore, the function $\psi$ attains global minimum at the one of the endpoints of its domain.

Now, Corollary~\ref{cor:m1=2} implies that inequality~\eqref{eq:goal} holds true in the cases $b_{1}=2$ and $b_{2}=n$.  If $x = b_1 = a_{c_{1}}>2$, then the equation $m_1 \theta_1 + m_2 \theta_2 = n \theta + \pi$ gives $$ (2n-2) \pi -2n \theta = \left( 2n+2-2a_{c_{1}}\right)\pi-2\left(n+2-a_{c_{1}}\right)\theta_{2}\quad\left(=a\text{, say}\right).$$
Now, inequality~\eqref{eq:goal} equivalent to $$4\left(n+2-a_{c_{1}}\right)\cosh^{-1}\left(\frac{\cos\left(\frac{\pi}{2\left(n+2-a_{c_{1}}\right)}\right)}{\sin\left(\frac{\left(2n+2-2a_{c_{1}}\right)\pi-a}{4\left(n+2-a_{c_{1}}\right)}\right)}\right)\geq 4n \cosh^{-1}\left(\frac{\cos\left(\frac{\pi}{2n}\right)}{\sin\left(\frac{\left(2n-2\right)\pi-a}{4n}\right)}\right),$$ which follows from Lemma \ref{lem:iso_ineq_cont}, as $a_{c_{1}}\leq n$. The case $b_{2}=n+2-a_{c_{2}}$ follows similarly (as $m_{1}$ and $m_{2}$ can be interchanged), so that the inequality \eqref{eq:goal} holds true for $x=b_{1}$ and $x=b_{2}$ in all cases. Thus, we get that
\begin{align*}
\phi_{c_{1}}\left(m_{1}\right)+\phi_{c_{2}}\left(m_{2}\right) & \geq\min\left\{ \phi_{c_{1}}(x)+\phi_{c_{2}}\left(n+2-x\right)\mid x\in\left\{ b_{1},b_{2}\right\} \right\} \\
 & \geq4n\cosh^{-1}\left(\frac{\cos\left(\frac{\pi}{2n}\right)}{\sin\left(\frac{\theta}{2}\right)}\right)
\end{align*}
 as desired.
 \end{proof}


\begin{thebibliography}{}
\bibitem{Akr} H. Akrout, \textit{Singularites topologiques des systoles generalisees}, Topology 42(2), 291-3008, 2003.

\bibitem{Aou} Tarik Aougab and Shinnyih Huang, \textit{Minimally Intersecting Filling Pairs On Surfaces}, Algebraic and Geometric Topology, Vol. 15 (2015), 903-932

\bibitem{Bez} K Bezdek, \textit{Ein elementarer Beweis fur die isoperimetrische Ungleichung in der Euklidischen und hyperbolischen Ebene}, Ann. Univ. Sci. Budapest, Eotvos Sect. Math. 27 (1984), 107-112

\bibitem{Bus} Peter Buser, \textit{Geometry and Spectra of Compact Riemann Surfaces}, Progress in Mathematics, Vol. 106

\bibitem{Far} B. Farb and D. Margalit, \textit{A Primer on Mapping Class Groups}, Princeton Mathematical Series Vol. 49, (Princeton University Press, 2012).

\bibitem{Gas} Jonah Gaster, \textit{A Short Proof of a Conjecture of Aougab-Huang}, 	
arXiv:2002.09349

\bibitem{Bid} Bidyut Sanki, \textit{Filling of closed surfaces}, Journal of Topology and Analysis, Vol. 10. No. 4 (2018), 897-913.

\bibitem{San} Bidyut Sanki, Arya Vadnere, \textit{Isoperimetric Inequality for Disconnected Regions}, arXiv:1907.07096

\end{thebibliography}
\end{document}